\newtheorem{theorem}{Theorem}
\newtheorem{definition}[theorem]{Definition}
\newtheorem{example}[theorem]{Example}
\newtheorem{proposition}[theorem]{Proposition}
\newtheorem{remark}[theorem]{Remark}
\numberwithin{equation}{section} 
\numberwithin{theorem}{section}
\title{Stochastic dominance of sums of risks under dependence conditions}   
\author{Jorge Navarro$^{1,}$\footnote{Correspondence to: Jorge Navarro, Facultad
		de Matemáticas, Universidad de Murcia, 30100 Murcia, Spain. E-mail: \textit{jorgenav@um.es}. Telephone number: 34 868883509.
		Fax number: 34 868884182.}
	\quad
	and Jose Miguel Zapata$^1$%\footnote{Universidad de Murcia, Murcia, Spain.}
	\\
	\quad
	\\
	$^1$Facultad de Matemáticas, Universidad de Murcia,
		Murcia, Spain.
}
\begin{document}
	\maketitle
	
	\begin{abstract}
		
We provide conditions for the stochastic dominance comparisons of a risk $X$ and an associated
risk $X+Z$, where $Z$ represents the uncertainty due to the environment and where $X$ and $Z$ can be dependent. The comparisons depend on both the copula $C$ between the distributions of $X$ and $Z$ and  on the distribution of $Z$. We provide two different conditions for $C$ which represents new positive dependence properties. Regarding $Z$, we need some symmetry or asymmetry (skew) properties. Some illustrative examples are provided.  
		
		\quad
		
		\textbf{Keywords:} Dependence properties %Expected shortfall 
		$\cdot$ Stochastic orders $\cdot$ Copula $\cdot$ C-convolutions $\cdot$ Stochastic dominance $\cdot$ Utility function.%Laplace transform.
		
		\quad
		
	\textbf{AMS 2020 Subject Classification:}    Primary 60E15. Secondary 90B25.
		
	\end{abstract}

\section{Introduction}%

%	\red{To be written at the end. This is just a draft:}

Let us consider a risk (or a payoff in an insurance coverage) represented by a random variable $X$ and an associated risk $Y=X+Z$, where we have added a random variable $Z$ which represents some ``noise'' or uncertainty in the practical development of $X$. Here $X$ and $Z$ can be dependent since they share the same environment (usually they are positively dependent). In this situation it is interesting to establish  conditions under which $Y$ is riskier than $X$ under some criteria (since in $Y$ we have added some uncertainty). In this sense, if $u:\mathbb{R}\to \mathbb{R}$ is an \textit{utility function}, we want to compare $E(u(X))$ and  $E(u(Y))$ under some conditions for the function $u$ (increasing, convex, increasing and convex, increasing and concave, etc.). These conditions lead to comparisons in size, in variance or in weights of tails.

Several results have been obtained in the literature to stochastically compare the risks $X$ and $X+Z$. The first main (classic) results were obtained in \cite{RS70,SSu03,S65}. These results and some additional results can also be seen in \cite{R04,Z21} and in the classic books on stochastic comparisons  \cite{DDGK05,MS02,SS07}. Recently, Guan et al. \cite{GHW24} have obtained some improvements of these classic results. The details are given in the following section. 

The purpose of the present paper is to use the recent results obtained in \cite{GHW24} to get stochastic comparisons between $X$ and $X+Z$. We use a copula approach which allows us to compare $X$ with $X+Z$ under some conditions for the copula $C$ of $(X,Z)$. The conditions obtained for $C$ are weak dependence properties and they allow us to get distribution-free comparison results, that is, we can change the marginal distributions in a fixed dependence structure. To do so we need to define new weak  positive dependence properties based on the underlying copula. We also need some symmetry (or asymmetry) properties for the distribution of  $Z$.

The rest of the paper is scheduled as follows. In the following section we establish the notation and we provide the basic results needed for the present paper.  The main results are placed in Section 3 where we also define the new dependence properties.  Some illustrative examples are provided in Section 4. Finally, in Section 5,  we state the main conclusions about the obtained results and some open questions for future research.     

\section{Preliminary results}

Let $\mathcal{L}_1$ be the set of integrable random variables over an atomless probability space $(\Omega, \mathcal{F}, \Pr)$. 
Let $X\in \mathcal{L}_1 $ be a random variable representing a random payoff or wealth and let $F(t)=\Pr(X\leq t)$  be its cumulative distribution function (CDF).  Its survival (or reliability) function is $\bar F(t)=1-F(t)=\Pr(X>t)$ and if $F$ is absolutely continuous, its probability density function (PDF) is defined by $f(t)=F'(t)$ wherever these derivatives exist.

The expected value (mean) of $X$ can be obtained as
\begin{equation}\label{mean}
	\mu=E(X)=\int_{-\infty}^{\infty} xdF(x)= \int_{0}^{\infty} \bar F(x)dx-\int_{-\infty}^{0} F(x)dx=\mu_+-\mu_-,
\end{equation}
where $\mu_+=\int_{0}^{\infty} \bar F(x)dx$ and  $\mu_-=\int_{-\infty}^{0}  F(x)dx$. Note that $\mu$ exists if and only if 
$\mu_+$ or $\mu_-$ are finite and $\mu$ is finite if both are finite. If $X$ is nonnegative we just need the first integral in \eqref{mean}, that is $\mu=\mu_+$. 

We shall use the following stochastic orders. Their main properties and applications can be seen in  \cite{DDGK05,MS02,SS07}. Throughout the paper, `increasing' means `non-decreasing' and  `decreasing' means `non-increasing'.  Whenever we use a derivative, an expectation or a conditional distribution, we are tacitly assuming that they exist.  

If $X,Y\in \mathcal{L}_1 $, then we say that:
\begin{itemize}
	\item
	$X$ is less  than $Y$ in the stochastic order (or in the first-order stochastic dominance), shortly written as $X\leq_{ST}Y$,  if $\bar F_X\leq \bar F_Y$ or, equivalently, if $E(u(X))\leq E(u(Y))$ for all increasing functions $u$ such that these expectations exist. Then $X=_{ST}Y$ represents equality in law (distribution).
	
	\item $X$ is less  than $Y$ in the increasing concave  order (or in the second-order stochastic dominance), shortly written as $X\leq_{ICV}Y$,  if $E(u(X))\leq E(u(Y))$ for all increasing concave functions $u$ such that these expectations exist or, equivalently, if 
	$$\int_{-\infty}^t F_X(x)dx\geq \int_{-\infty}^t F_Y(x)dx\ \text{ for all }t.$$

	\item $X$ is less than $Y$ in the increasing convex  order (or in the stop-loss order), shortly written as $X\leq_{ICX}Y$,  if $E(u(X))\leq E(u(Y))$ for all increasing convex functions $u$ such that these expectations exist or, equivalently, if 
	$$\int_t^{\infty} \bar F_X(x)dx\leq \int_t^{\infty} \bar F_Y(x)dx\ \text{ for all }t.$$
	The function  $\pi(t)=\int_t^{\infty} \bar F(x)dx$ is called \textit{stop-loss  function} (see e.g.  \cite{MS02}, p.\ 19). Hence, $X\leq_{ICX}Y$ holds if and only if $\pi_X\leq \pi_Y$.  It is also equivalent to $-X\geq_{ICV}-Y$.

	\item $X$ is less  than $Y$  in the convex  order, shortly written as $X\leq_{CX}Y$,  if $E(u(X))\leq E(u(Y))$ for all convex functions $u$ such that these expectations exists or, equivalently, if $\pi_X\leq \pi_Y$  and $\lim_{t\to -\infty} \pi_X(t)-\pi_Y(t)=0$. It implies $E(X)=E(Y)$  and $Var (X)\leq Var (Y)$. It is also  equivalent to $X\leq_{ICX}Y$ and $E(X)=E(Y)$ (see \cite{MS02}, p.\ 19) and to $X\geq_{ICV}Y$ and $E(X)=E(Y)$.

	\item $X$ is less  than $Y$  in the convolution  order, shortly written as $X\leq_{CONV}Y$  if $Y=_{ST}X+Z$ where $Z$ is a nonnegative random variable independent of $X$  (see  \cite{SS07}, p.\ 70 and \cite{SSu03}).  
	%It implies the ST-order  $X\leq_{ST}Y$.
	
\end{itemize}

The relationships between these orders are summarized as follows: %in Table \ref{tab1}.
\[
\begin{tabular}{ c c c c c c c}
	$X\leq_{CONV}Y$ & $\Rightarrow$  &$X\leq_{ST}Y$ &  $\Rightarrow$ & $X\leq_{ICX}Y$&$\Leftarrow$ &$X\leq_{CX}Y$\\%
	&&	$\Downarrow$    
	&               & $\Downarrow$    &               &$\Downarrow$
	\\%
	$X\geq_{CX}Y$&$\Rightarrow$&$X\leq_{ICV}Y$ &  $\Rightarrow$ & $E(X)\leq E(Y)$&&$E(X)=E(Y)$%
\end{tabular}
\]

\quad

Now we are ready to state the classic results on this topic. They were  obtained in \cite{RS70,S65} and  can be summarized as follows. They can also be seen in Theorem 4.A.5 in \cite{SS07}, p.\ 183.

\begin{proposition}\label{ICV1}
	For $X,Y\in \mathcal{L}_1 $, $X\geq_{ICV}Y$ holds if and only if $Y=_{ST}W+Z$ for  $W,Z\in \mathcal{L}_1 $ such that $W=_{ST}X$ and  $E(Z|W)\leq 0$.
\end{proposition}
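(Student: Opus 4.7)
The plan is to prove the two implications separately: the sufficient direction is a short conditional-expectation computation using the supergradient of a concave function, while the necessary direction is the substantive part and relies on a Strassen-type coupling theorem.

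For the $(\Leftarrow)$ implication, I assume $Y=_{ST}W+Z$ with $W=_{ST}X$ and $E(Z\mid W)\leq 0$, and pick any increasing concave $u$ for which the relevant expectations exist. Let $g$ be a measurable supergradient of $u$ (for instance its right derivative), which is non-negative because $u$ is increasing. The pointwise supergradient inequality $u(W+Z)\leq u(W)+g(W)\,Z$, after conditioning on $W$, gives
\begin{equation*}
E\bigl(u(W+Z)\mid W\bigr)\leq u(W)+g(W)\,E(Z\mid W)\leq u(W).
\end{equation*}
Taking expectations and using $W=_{ST}X$ yields $E(u(Y))=E(u(W+Z))\leq E(u(W))=E(u(X))$, so $X\geq_{ICV}Y$. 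A small technical step is to justify the manipulations when $g(W)$ is unbounded; this can be handled by first truncating $u$ so that $g$ is bounded and then passing to the limit by monotone or dominated convergence.

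For the $(\Rightarrow)$ implication I would invoke Strassen's representation theorem for the increasing concave order. Starting from the equivalent integral characterization of $X\geq_{ICV}Y$ recorded in the preliminaries, Strassen's theorem yields random variables $W$ and $\widetilde Y$ on the atomless probability space $(\Omega,\mathcal{F},\Pr)$ with $W=_{ST}X$, $\widetilde Y=_{ST}Y$, and $E(\widetilde Y\mid W)\leq W$ almost surely. Setting $Z:=\widetilde Y-W\in\mathcal{L}_1$ one then has $Y=_{ST}\widetilde Y=W+Z$ and $E(Z\mid W)=E(\widetilde Y\mid W)-W\leq 0$, as required; integrability of $Z$ is inherited from that of $W$ and $\widetilde Y$.

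The main obstacle is precisely the existence of this coupling: it is a non-trivial measure-theoretic fact, and the work lies in quoting (or re-deriving) Strassen's theorem in the form stated here and in verifying that atomlessness of the underlying probability space is what allows the coupling to be realized on $(\Omega,\mathcal{F},\Pr)$ rather than only on an auxiliary product space. Everything else in the argument is routine once this coupling is available.
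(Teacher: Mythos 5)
Your argument is correct: the supergradient inequality handles the sufficiency direction cleanly, and the coupling you invoke for necessity is exactly the Strassen-type representation on which this classical result rests. The paper itself gives no proof of this proposition---it is quoted from \cite{RS70,S65} and Theorem 4.A.5 of \cite{SS07}---so your reconstruction follows essentially the same route as the cited sources.
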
  

\begin{proposition}
	For $X,Y\in \mathcal{L}_1 $,	$X\leq_{ICX}Y$ holds if and only if $Y=_{ST}W+Z$ for  $W,Z\in \mathcal{L}_1$ such that $W=_{ST}X$ and $E(Z|W)\geq 0$. 
\end{proposition}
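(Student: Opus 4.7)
The plan is to reduce the statement to the preceding Proposition \ref{ICV1} via the equivalence $X\leq_{ICX}Y \Longleftrightarrow -X\geq_{ICV}-Y$ recorded in the list of orders. This sign-flip symmetry means almost no fresh work is required.

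For the ``if'' direction I would actually avoid going through $-Y$ and argue directly, since it is shorter. Assuming $Y=_{ST}W+Z$ with $W=_{ST}X$ and $E(Z\mid W)\geq 0$, fix an increasing convex function $u$ for which the relevant expectations are finite. Conditional Jensen's inequality applied to the convex function $u$ yields
\[
E\bigl(u(W+Z)\mid W\bigr)\geq u\bigl(W+E(Z\mid W)\bigr)\quad\text{a.s.},
\]
and since $u$ is increasing and $E(Z\mid W)\geq 0$ a.s., the right-hand side dominates $u(W)$ pointwise. Taking expectations and using $W=_{ST}X$ delivers $E(u(Y))=E(u(W+Z))\geq E(u(W))=E(u(X))$, so $X\leq_{ICX}Y$.

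For the converse I would invoke Proposition \ref{ICV1}. Suppose $X\leq_{ICX}Y$; then $-X\geq_{ICV}-Y$, and Proposition \ref{ICV1} applied to the pair $(-X,-Y)$ produces $W',Z'\in\mathcal{L}_1$ such that $-Y=_{ST}W'+Z'$, $W'=_{ST}-X$ and $E(Z'\mid W')\leq 0$. Setting $W:=-W'$ and $Z:=-Z'$ one has at once $Y=_{ST}W+Z$ and $W=_{ST}X$. Since $\sigma(W)=\sigma(-W')=\sigma(W')$, linearity of conditional expectation yields
\[
E(Z\mid W)=E(-Z'\mid W')=-E(Z'\mid W')\geq 0,
\]
which is the required condition.

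The main thing to be careful about is the sign-flip bookkeeping, in particular the identification $\sigma(W)=\sigma(W')$, which is what lets the conditional expectation pass through the minus sign. The nontrivial content, namely the construction of the coupling $(W',Z')$, is already absorbed into Proposition \ref{ICV1}, so no Strassen-type argument or approximation step is needed at this stage.
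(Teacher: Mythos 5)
Your argument is correct. Note that the paper itself offers no proof of this proposition: it is quoted as a classic result with references to Rothschild--Stiglitz, Strassen, and Theorem 4.A.5 of Shaked--Shanthikumar, so there is no in-paper proof to compare against. Your two halves are both sound: the sufficiency direction via conditional Jensen, $E(u(W+Z)\mid W)\geq u(W+E(Z\mid W))\geq u(W)$ (with the usual proviso that $u(W+Z)^-$ is integrable because a convex $u$ is bounded below by an affine function of the integrable variable $W+Z$), and the necessity direction by applying Proposition \ref{ICV1} to the pair $(-X,-Y)$ using the equivalence $X\leq_{ICX}Y\Leftrightarrow -X\geq_{ICV}-Y$ recorded in the paper's list of orders, with the observation $\sigma(W)=\sigma(-W')=\sigma(W')$ justifying the sign flip inside the conditional expectation. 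The only substantive content you are taking for granted is Proposition \ref{ICV1} itself, which is exactly what the paper also does, so the reduction is legitimate.
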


%\begin{proposition}
%$X\leq_{CX}Y$ holds if and only if $Y=_{ST}X+Z$ for a random variable $Z$ such that $E(Z|X=x)= 0$ for all values $x$ for which these expectations exist.  
%\end{proposition}  

%Note that the convolution order is defined with a similar property that assumes independence between $X$ and $Z$. 

An analogous result can be stated for the stochastic order from Theorem 1.A.1 in \cite{SS07}, p. 5.

\begin{proposition}\label{prop2.4}
	For $X,Y\in \mathcal{L}_1 $, $X\leq_{ST}Y$ holds if and only if  $Y=_{ST}W+Z$ for  $W,Z\in \mathcal{L}_1$ such that $W=_{ST}X$ and such that $Z\geq 0$. 
\end{proposition}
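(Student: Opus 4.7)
The sufficiency direction is straightforward: if $W =_{ST} X$ and $Z \geq 0$ (almost surely), then pointwise $W + Z \geq W$, which gives $W+Z \geq_{ST} W$ and hence, by the assumed equality in law, $Y \geq_{ST} X$. I would dispose of this direction in one line.

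The bulk of the work is the necessity direction. The plan is to use the classical quantile coupling. Since the underlying probability space $(\Omega,\mathcal{F},\Pr)$ is atomless, it supports a random variable $U$ with uniform distribution on $(0,1)$. Let $F_X^{-1}$ and $F_Y^{-1}$ denote the left-continuous (generalized) inverses of $F_X$ and $F_Y$, and define
\begin{equation*}
W := F_X^{-1}(U), \qquad W' := F_Y^{-1}(U), \qquad Z := W' - W.
\end{equation*}
The standard facts about the quantile transform give $W =_{ST} X$ and $W' =_{ST} Y$, so both are in $\mathcal{L}_1$. The hypothesis $X \leq_{ST} Y$ means $\bar F_X \leq \bar F_Y$, equivalently $F_X \geq F_Y$ pointwise; taking generalized inverses reverses this to $F_X^{-1} \leq F_Y^{-1}$ on $(0,1)$. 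Evaluating at $U$ yields $W \leq W'$ almost surely, so $Z \geq 0$. Finally $W + Z = W' =_{ST} Y$, as required.

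The only mildly delicate point is the monotonicity step $F_X \geq F_Y \Rightarrow F_X^{-1} \leq F_Y^{-1}$; this is a well-known property of generalized inverses and will just be invoked. Likewise, the existence of $U$ uniform on $(0,1)$ under the atomlessness assumption is standard and was set up at the start of the preliminary section, so no separate construction is needed. I expect no substantive obstacle; the proof is essentially the same coupling argument that underlies Propositions~\ref{ICV1} and the one following it, specialized to the sign constraint $Z \geq 0$ coming directly from $W \leq W'$ rather than from a conditional expectation condition.
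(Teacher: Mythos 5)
Your proof is correct and follows essentially the same route as the paper, which does not write out an argument but simply derives the statement from Theorem 1.A.1 in Shaked and Shanthikumar (2007); the standard proof of that theorem is precisely the quantile coupling $W=F_X^{-1}(U)$, $W'=F_Y^{-1}(U)$ you describe. Both directions in your write-up are sound, including the monotonicity of generalized inverses, so nothing further is needed.
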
  

Note that the random variable $Z$ used in the preceding propositions may depend on $W$. The convolution order is already defined with a property of this type where $Z$ is assumed to be independent of $W$.  So it implies the ST order. Analogously, from these properties, it is clear that   the ST order implies both the ICV and  ICX orders (a well known property that can also be deduced from the definitions).

Now we state the improvements of these results obtained recently in Guan et al. \cite{GHW24}. They will be the basic tools for the results  included in  the present paper.

\begin{proposition}[Theorem 1 in \cite{GHW24}]\label{G1}
	For $X,Y\in \mathcal{L}_1 $, 	the order $X\geq_{ICV}Y$ holds if and only if $Y=_{ST}W+Z$ for  $W,Z\in \mathcal{L}_1$ such that $W=_{ST}X$ and  $E(Z|X\leq x)\leq 0$ for all values $x$ for which these expectations exist.  
\end{proposition}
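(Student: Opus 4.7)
The plan is to reduce the only-if direction to the classical Proposition \ref{ICV1} via the tower property, and to prove the if direction by a direct convexity argument applied to the put-option functionals $t\mapsto E((t-\cdot)_+)$.

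For the only-if direction, assume $X\geq_{ICV}Y$. Proposition \ref{ICV1} supplies $W,Z\in\mathcal{L}_1$ with $W=_{ST}X$, $Y=_{ST}W+Z$, and the pointwise bound $E(Z\mid W)\leq 0$ a.s. Since the event $\{W\leq x\}$ is $\sigma(W)$-measurable, the tower property gives
\[
E\bigl(Z\,\mathbf{1}_{\{W\leq x\}}\bigr)=E\bigl(E(Z\mid W)\,\mathbf{1}_{\{W\leq x\}}\bigr)\leq 0,
\]
so $E(Z\mid W\leq x)\leq 0$ whenever $P(W\leq x)>0$. Since $W=_{ST}X$, after relabeling $W$ as $X$ in the coupling this is precisely the condition stated in the proposition.

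For the if direction, recall the equivalent characterization $X\geq_{ICV}Y\iff E((t-X)_+)\leq E((t-Y)_+)$ for every $t\in\mathbb{R}$, which follows from the definition in the preliminaries by integration by parts together with $X,Y\in\mathcal{L}_1$. Fix $t\in\mathbb{R}$ and set $\psi_t(x)=(t-x)_+$, a convex function whose subdifferential at any $x$ contains $g_t(x):=-\mathbf{1}_{\{x\leq t\}}$ (at the kink $x=t$ the value $-1$ belongs to the subdifferential $[-1,0]$). Convexity then yields the pointwise inequality
\[
\psi_t(W+Z)\geq \psi_t(W)+g_t(W)\,Z=\psi_t(W)-Z\,\mathbf{1}_{\{W\leq t\}}.
\]
Taking expectations and invoking the hypothesis,
\[
E(\psi_t(W+Z))-E(\psi_t(W))\geq -E\bigl(Z\,\mathbf{1}_{\{W\leq t\}}\bigr)=-P(W\leq t)\,E(Z\mid W\leq t)\geq 0.
\]
Using $Y=_{ST}W+Z$ and $W=_{ST}X$, this reads $E((t-X)_+)\leq E((t-Y)_+)$, and varying $t$ completes the argument.

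The main obstacle is selecting a test family aligned with the weakened hypothesis. Proposition \ref{ICV1} works with the pointwise bound $E(Z\mid W)\leq 0$, which integrates against any $\sigma(W)$-measurable nonnegative weight; here we only have an averaged bound over half-lines $\{W\leq x\}$, so we must use test functions whose subgradients are precisely indicators of such half-lines. The put-payoffs $\psi_t$ are tailored to this role, which is why the characterization of $\leq_{ICV}$ via stop-loss-type integrals, rather than the abstract utility definition, is the right entry point.
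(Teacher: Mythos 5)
Your proof is correct. Note that the paper itself does not prove this statement: it is imported verbatim as Theorem 1 of \cite{GHW24}, so there is no in-paper argument to compare against, and what you have written is a legitimate self-contained derivation. Your two halves are well matched to the structure of the result. The ``only if'' direction carries no new content beyond the classical Proposition \ref{ICV1}: since $E(Z\mid W)\leq 0$ a.s.\ implies $E(Z\,\mathbf{1}_{\{W\leq x\}})\leq 0$ by the tower property, the weaker half-line condition is automatic, and your relabeling of $W$ as $X$ correctly accounts for the slight abuse of notation in the statement. The real content is the ``if'' direction, and your subgradient argument with the put payoffs $\psi_t(x)=\max(t-x,0)$ is clean and elementary: the key observation that $-\mathbf{1}_{\{x\leq t\}}$ is a measurable selection of the subdifferential (including at the kink $x=t$) is exactly what aligns the test family with the hypothesis on half-line conditional means, and the integrability of $\psi_t(W+Z)$ is guaranteed by $W,Z\in\mathcal{L}_1$. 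One point worth making explicit: when $\Pr(W\leq t)=0$ the conditional expectation is undefined (which is why the statement carries the caveat ``for which these expectations exist''), but then $E(Z\,\mathbf{1}_{\{W\leq t\}})=0$ directly, so the chain of inequalities survives; your factorization $E(Z\,\mathbf{1}_{\{W\leq t\}})=\Pr(W\leq t)E(Z\mid W\leq t)$ should be read with that degenerate case handled separately.
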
  

Note that the condition ``$E(Z|X\leq x)\leq 0$ for all $x$'' implies $E(Z)\leq 0$ and that it is weaker than the condition ``$E(Z|X)\leq 0$'' assumed in Proposition \ref{ICV1} (see \cite{GHW24}).

\begin{proposition}[Corollary 4 in \cite{GHW24}]\label{G2}
	
	For $X,Y\in \mathcal{L}_1 $, 	the order $X\leq_{ICX}Y$ holds if and only if $Y=_{ST}W+Z$ for  $W,Z\in \mathcal{L}_1$ such that $W=_{ST}X$ and  $E(Z|X> x)\geq 0$ for all values $x$ for which these expectations exist.  
\end{proposition}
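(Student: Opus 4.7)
My plan is to reduce the statement to Proposition \ref{G1} via the duality between the ICX and ICV orders noted in the paper, namely $X\leq_{ICX}Y \iff -X\geq_{ICV} -Y$. Once this reduction is in place, the argument should be essentially a sign flip combined with a change of variable in the conditioning event.

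First I would set $X':=-X$ and $Y':=-Y$. By the duality just recalled, $X\leq_{ICX}Y$ is equivalent to $X'\geq_{ICV}Y'$. Applying Proposition \ref{G1} to the pair $(X',Y')$, this holds if and only if $Y'=_{ST} W' + Z'$ for some $W',Z'\in\mathcal{L}_1$ with $W'=_{ST} X'$ and $E(Z'\,|\,X'\leq x)\leq 0$ for all $x$ at which the conditional expectation exists. Now I would define $W:=-W'$ and $Z:=-Z'$, so that $W,Z\in\mathcal{L}_1$, the identity $Y'=_{ST} W'+Z'$ becomes $Y=_{ST} W+Z$, and $W'=_{ST} X'$ becomes $W=_{ST} X$.

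The heart of the argument is translating the conditional expectation condition. The event $\{X'\leq x\}=\{-X\leq x\}$ equals $\{X\geq -x\}$, and $E(Z'\,|\,X'\leq x)=-E(Z\,|\,X\geq -x)$. Hence the inequality $E(Z'\,|\,X'\leq x)\leq 0$ for all admissible $x$ is equivalent to $E(Z\,|\,X\geq y)\geq 0$ for all admissible $y$ (taking $y=-x$). The only subtle point is the distinction between the strict event $\{X>y\}$ appearing in the statement and the non-strict event $\{X\geq y\}$ produced by the change of variable. If $X$ has no atom at $y$ the two coincide; at an atom there is a possible discrepancy, but since the condition is required to hold for all $y$ for which the conditional expectation is defined, one can recover either version from the other by a one-sided limit, so the two formulations are equivalent as quantified statements. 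I expect this atom-handling to be the main (and really only) obstacle, but it is routine: one checks that replacing $\{X\geq y\}$ by $\{X>y\}$ corresponds to replacing $\{X'\leq x\}$ by $\{X'<x\}$, and the version of Proposition \ref{G1} established in \cite{GHW24} tolerates this switch (or, equivalently, one appeals to right-continuity of the relevant tail expectations).

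Combining these steps yields both implications simultaneously: the equivalence in Proposition \ref{G1} applied to $(-X,-Y)$ transfers, under the bijection $(W',Z')\leftrightarrow(-W',-Z')$, to the desired equivalence for $(X,Y)$ under the ICX order.
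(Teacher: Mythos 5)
Your proposal is correct. Note that the paper itself gives no proof of this statement -- it is quoted verbatim as Corollary~4 of \cite{GHW24} -- so there is no internal argument to compare against; but your reduction is exactly the natural (and, in the source, the intended) route: the reflection $X\mapsto -X$, $Y\mapsto -Y$ turns the ICX characterization into the ICV characterization of Proposition~\ref{G1}, the bijection $(W',Z')\leftrightarrow(-W',-Z')$ preserves membership in $\mathcal{L}_1$ and the identities $Y=_{ST}W+Z$, $W=_{ST}X$, and the sign flip converts $E(Z'\mid W'\leq x)\leq 0$ into $E(Z\mid W\geq -x)\geq 0$. The one point you flag as delicate -- passing between the events $\{W\geq y\}$ and $\{W> y\}$ -- is handled correctly: since $\{W>y\}=\bigcup_n\{W\geq y+1/n\}$ and $\{W\geq y\}=\bigcap_n\{W>y-1/n\}$, dominated convergence with dominating function $|Z|\in\mathcal{L}_1$ shows that nonnegativity of $E(Z\,\mathbf{1}_{\{W\geq y\}})$ for all $y$ is equivalent to nonnegativity of $E(Z\,\mathbf{1}_{\{W>y\}})$ for all $y$, and dividing by the (positive) probability of the conditioning event gives the equivalence of the two quantified conditional-expectation statements. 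The only cosmetic remark is that the conditioning variable in the displayed condition should really be $W$ rather than $X$ (an abuse of notation already present in the paper's statement of Propositions~\ref{G1} and~\ref{G2}); your argument implicitly and correctly works with $W$.
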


\begin{proposition}[Corollary 5 in \cite{GHW24}]\label{G3}
	For $X,Y\in \mathcal{L}_1 $, the following conditions are equivalent: 
	\begin{itemize}
		\item[(i)] $X\leq_{CX}Y$;
		\item[(ii)] $Y=_{ST}W+Z$ for  $W,Z\in \mathcal{L}_1$ such that $W=_{ST}X$ and  $E(Z|W)=0$;
		\item[(iii)] $Y=_{ST}W+Z$ for  $W,Z\in \mathcal{L}_1$ such that $W=_{ST}X$,  $E(Z)=0$ and $E(Z|W\leq x)\leq 0$ for all values $x$ for which these expectations exist;
		\item[(iv)] $Y=_{ST}W+Z$ for  $W,Z\in \mathcal{L}_1$ such that $W=_{ST}X$,  $E(Z)=0$ and $E(Z|W> x)\geq 0$ for all values $x$ for which these expectations exist.
	\end{itemize}   
\end{proposition}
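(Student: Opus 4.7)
The plan is to reduce the four-way equivalence to the already-established Propositions \ref{G1} and \ref{G2} combined with the elementary fact (recorded in the preliminaries) that $X\leq_{CX}Y$ is equivalent to ``$X\geq_{ICV}Y$ and $E(X)=E(Y)$'', and also to ``$X\leq_{ICX}Y$ and $E(X)=E(Y)$''. The one genuinely substantive step will be the production of a martingale representation $E(Z|W)=0$ from the CX order.

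First I would establish (i) $\Leftrightarrow$ (iii). By the preliminaries, (i) means $X\geq_{ICV}Y$ and $E(X)=E(Y)$. Proposition \ref{G1} converts the ICV inequality into the existence of a decomposition $Y=_{ST}W+Z$ with $W=_{ST}X$ and $E(Z\mid W\leq x)\leq 0$ for all $x$. For any such decomposition, $E(Z)=E(Y)-E(W)=E(Y)-E(X)$, so the additional requirement $E(X)=E(Y)$ is exactly $E(Z)=0$; adding this to the decomposition yields (iii), and reading the chain backwards gives the converse. An identical argument, with Proposition \ref{G2} replacing Proposition \ref{G1}, yields (i) $\Leftrightarrow$ (iv). The implication (ii) $\Rightarrow$ (iii) is then a routine tower-property check: $E(Z\mid W)=0$ a.s.\ gives $E(Z)=0$ and $E(Z\mathbf{1}_{\{W\leq x\}})=E(E(Z\mid W)\mathbf{1}_{\{W\leq x\}})=0$, hence $E(Z\mid W\leq x)=0$.

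The remaining implication (i) $\Rightarrow$ (ii) is the heart of the matter, and I would handle it via the martingale coupling form of Strassen's theorem. Since $(\Omega,\mathcal{F},\Pr)$ is atomless, the hypothesis $X\leq_{CX}Y$ produces random variables $W^{*}=_{ST}X$ and $Y^{*}=_{ST}Y$ on $\Omega$ with $E(Y^{*}\mid W^{*})=W^{*}$; setting $Z^{*}:=Y^{*}-W^{*}$ yields $Y^{*}=W^{*}+Z^{*}$ and $E(Z^{*}\mid W^{*})=0$, which is precisely (ii).

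The main obstacle is exactly this last step: no pointwise modification of $Z$ inside a given decomposition can upgrade ``$E(Z\mid W\leq x)\leq 0$ for all $x$'' to the much stronger ``$E(Z\mid W)=0$'', because one is forced to rebuild the joint law of $(W,Z)$ as a martingale coupling. The atomlessness of $(\Omega,\mathcal{F},\Pr)$ is invoked precisely to realize such a coupling on the given probability space; everything else in the proof is bookkeeping on top of Propositions \ref{G1}--\ref{G2}.
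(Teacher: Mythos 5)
Your proof is correct, but note that the paper does not actually prove Proposition~\ref{G3}: it imports the statement verbatim as Corollary~5 of \cite{GHW24}, so there is no in-paper argument to match yours against. Your derivation is the natural way to obtain it from the material the paper does quote. The equivalences (i)~$\Leftrightarrow$~(iii) and (i)~$\Leftrightarrow$~(iv) follow exactly as you say from Propositions~\ref{G1} and~\ref{G2} together with the preliminary fact that $X\leq_{CX}Y$ is the same as $X\geq_{ICV}Y$ (resp.\ $X\leq_{ICX}Y$) plus $E(X)=E(Y)$, the point being that for \emph{any} decomposition $Y=_{ST}W+Z$ with $W=_{ST}X$ one has $E(Z)=E(Y)-E(X)$, so the extra mean condition costs nothing. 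Your (ii)~$\Rightarrow$~(iii) via $E(Z\mathbf{1}_{\{W\leq x\}})=E\bigl(E(Z\mid W)\mathbf{1}_{\{W\leq x\}}\bigr)=0$ is a correct use of the tower property, and you correctly isolate (i)~$\Rightarrow$~(ii) as the one nontrivial step: it is precisely Strassen's martingale-coupling characterization of the convex order (Theorem~3.A.4 in \cite{SS07}, going back to \cite{S65}), and you rightly flag that the atomlessness of $(\Omega,\mathcal{F},\Pr)$ is what lets the coupling be realized on the given space; no pointwise surgery on a decomposition satisfying (iii) could produce $E(Z\mid W)=0$. The cycle (ii)~$\Rightarrow$~(iii)~$\Rightarrow$~(i)~$\Rightarrow$~(ii) together with (i)~$\Leftrightarrow$~(iv) closes the equivalence. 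One cosmetic remark: the paper's statements of Propositions~\ref{G1} and~\ref{G2} condition on $X$ rather than on $W$ (an apparent typo, since $Z$ is coupled with $W$); you silently use the intended reading with $W$, which is the right call.
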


%\quad 

%\red{No he encontrado resultados de este tipo para el orden HR ni para el LR. Claramente se tiene el resultsdo del orden ST pero no se que más hay que pedir para subir a los órdenes HR o LR. Para el orden en convolución sí está estudiado en el libro de Shaked. }

%\quad 

%\red{Copulas aqui?}

\section{Main results}

Now we want to study conditions to get comparison results between the risks $X$ and $X+Z$. %First, we state the preceding results as follows. 
To this end we need the following dependence concept defined in \cite{BL09}, p.\ 117. 

%Throughout the paper whenever we write an expectation or a conditional distribution we are tacitly assuming that they exist.  

\begin{definition}
	For $X,Z\in \mathcal{L}_1 $, the random vector $(X,Z)$ is Positive  Quadrant Dependent in Expectation for $Z$ given $X$, shortly written as   $PQDE(Z|X)$,  if 
	\begin{equation} \label{PQDE}
		E(Z|X> x)\geq E(Z)\   \text{ for all } x\in\mathbb{R}.
	\end{equation} 
\end{definition}

The negative dependence notion $NQDE(Z|X)$ is defined by reverting the inequality in \eqref{PQDE}.
Note that \eqref{PQDE} holds if and only if
\begin{equation} \label{PQDEbis}
	E(Z|X\leq x)\leq E(Z)\   \text{ for all } x\in\mathbb{R}.
\end{equation}
In \cite{BL09}, p.\ 118, it is proved that $PQDE(Z|X)$ implies $Cov(X,Z)\geq 0$. 

The PQDE is a weaker version of the classic PQD property defined as 
\begin{equation*} \label{PQD1}
	(Z|X> x)\geq_{ST} Z\   \text{ for all } x\in\mathbb{R}
\end{equation*} 
The PQD  property can also be written as 
\begin{equation}\label{PQD}
	\Pr(X\leq x,Z\leq z)\geq  \Pr(X\leq x)\Pr(Z\leq z)\ \text{ for all } x,z\in\mathbb{R}
\end{equation}
or as 
$$\Pr(X>x,Z>z)\geq  \Pr(X>x)\Pr(Z>z)\ \text{ for all } x,z\in\mathbb{R} $$
(see \cite{BL09}, p.\ 108).
In \cite{NPS21} it is proved that several positive  dependence notions (defined in a similar way) imply the PQDE or the PQD properties.

Now, by using these dependence concepts, we can state the following result.

\begin{proposition}\label{ICX}
	For $X,Z\in \mathcal{L}_1 $, the order  	$X\leq_{ICX}X+Z$ holds  if $E(Z)\geq 0$ and \eqref{PQDE} holds. 
\end{proposition}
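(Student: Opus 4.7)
The proposal is to invoke Proposition \ref{G2} directly, taking $W=X$ (so trivially $W=_{ST}X$) and $Y=X+Z$, whence $Y=_{ST}W+Z$ is automatic. The sole hypothesis of Proposition \ref{G2} that needs verification is
\[
E(Z\mid W>x)\geq 0\quad\text{for all }x\in\mathbb{R}\text{ for which this conditional expectation is defined.}
\]
Since $W=X$, this is exactly $E(Z\mid X>x)\geq 0$.

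To establish this, I would chain the two hypotheses: by the $PQDE(Z\mid X)$ assumption \eqref{PQDE}, one has $E(Z\mid X>x)\geq E(Z)$ for every $x$, and by the assumption $E(Z)\geq 0$ the right-hand side is nonnegative. Concatenating these two inequalities yields $E(Z\mid X>x)\geq 0$, which is the required condition, and Proposition \ref{G2} then delivers $X\leq_{ICX} X+Z$.

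There is no substantive obstacle here: the statement is essentially a one-line corollary of Proposition \ref{G2} combined with the definition of $PQDE$. The only point worth flagging in the writeup is the integrability caveat — one should note that $X+Z\in\mathcal{L}_1$ since $X,Z\in\mathcal{L}_1$, and that the conditional expectations $E(Z\mid X>x)$ are well-defined wherever $\Pr(X>x)>0$, so that the qualifier ``for all values $x$ for which these expectations exist'' in Proposition \ref{G2} is satisfied automatically on the appropriate set.
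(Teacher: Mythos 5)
Your argument is correct and coincides exactly with the paper's own proof: both chain the $PQDE(Z|X)$ inequality $E(Z\mid X>x)\geq E(Z)$ with $E(Z)\geq 0$ and then apply Proposition \ref{G2} with $W=X$ and $Y=X+Z$. No further comment is needed.
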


%\begin{proof}\

%If $X\leq_{ICX}Y$ holds, then from Proposition \ref{G3}, there exists $Z$ such that $Y=_{ST}X+Z$ and  $E(Z|X> x)\geq 0$ for all values $x$. From $X\leq_{ICX}Y$ we get $E(X)\leq E(Y)=E(X)+E(Z)$ and so $E(Z)\geq 0$. Moreover,
%$$E(Z)=\lim_{x\to-\infty}E(Z|X> x) $$

The proof is immediate since if  $E(Z)\geq 0$ and $(X,Z)$ satisfies \eqref{PQDE}, then  $$E(Z|X> x)\geq E(Z)\geq 0$$ holds and so, from Proposition \ref{G2},    we get $X\leq_{ICX}X+Z$.

%\end{proof}

Note that the condition $E(Z)\geq 0$ is necessary to get the order $X\leq_{ICX}X+Z$ (since the ICX order  implies the order in means). However, we do not know if the $PQDE(Z|X)$ is a necessary property for that ordering since it is not equivalent to the property assumed in Proposition \ref{G2}. 
The advantage of the new statement is that the 
$PQDE(Z|X)$ property is sometimes easy to check than that property. For example, it holds if $(X,Z)$ is PQD. We have  similar results for the ICV and CX orders.

\begin{proposition}%[Theorem 1 in \cite{GHW24}]
	For $X,Z\in \mathcal{L}_1 $,   $X\geq_{ICV}X+Z$ holds  if  $E(Z)\leq 0$ and \eqref{PQDE} holds. 
\end{proposition}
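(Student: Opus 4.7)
The plan is to imitate exactly the proof of Proposition \ref{ICX}, but dualize the ICX route into the ICV route by invoking Proposition \ref{G1} instead of Proposition \ref{G2}. The statement is essentially the ``mirror image'' of the preceding one, so the structure of the argument should carry over verbatim after replacing the tail-conditional expectation by the lower-tail conditional expectation.

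Concretely, I would first rewrite the PQDE hypothesis in its equivalent lower-tail form \eqref{PQDEbis}, namely $E(Z|X\leq x)\leq E(Z)$ for all $x\in\mathbb{R}$. Combining this with the hypothesis $E(Z)\leq 0$ gives
\[
E(Z\mid X\leq x)\;\leq\; E(Z)\;\leq\; 0 \quad\text{for all } x\in\mathbb{R}.
\]
Next, set $W=X$ and observe that $X+Z =_{ST} W+Z$ trivially, so the decomposition required by Proposition \ref{G1} is at hand with $W=_{ST}X$. The inequality just displayed is precisely the condition ``$E(Z\mid X\leq x)\leq 0$ for all $x$'' appearing in Proposition \ref{G1} (with $Y=X+Z$). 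Applying that proposition immediately yields $X\geq_{ICV} X+Z$.

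There is no real obstacle here; the argument is a one-line deduction once the equivalence between \eqref{PQDE} and \eqref{PQDEbis} is recalled. The only small point worth noting, in the spirit of the remark following Proposition \ref{ICX}, is that the necessity of $E(Z)\leq 0$ is clear because $X\geq_{ICV}X+Z$ implies $E(X)\geq E(X+Z)$, i.e.\ $E(Z)\leq 0$, whereas the PQDE assumption is only a convenient sufficient strengthening and need not be necessary.
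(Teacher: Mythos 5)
Your proof is correct and is exactly the argument the paper intends: it rewrites \eqref{PQDE} in the equivalent lower-tail form \eqref{PQDEbis}, combines it with $E(Z)\leq 0$ to get $E(Z\mid X\leq x)\leq 0$ for all $x$, and applies Proposition \ref{G1} with $W=X$. The paper's own proof is just the one-line remark that the argument is analogous to that of Proposition \ref{ICX} using \eqref{PQDEbis} and Proposition \ref{G1}, which is precisely what you carried out.
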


The proof is analogous to that of  Proposition \ref{ICX} by using now  \eqref{PQDEbis} and Proposition \ref{G1}.

\begin{proposition}%[Theorem 1 in \cite{GHW24}]
	For $X,Y\in \mathcal{L}_1 $, the order $X\leq_{CX}Y$ holds if and only if $Y=_{ST}W+Z$ for  $W,Z\in \mathcal{L}_1 $ such that $W=_{ST}X$ and  $E(Z)= 0$ and \eqref{PQDE} holds for $(W,Z)$. 
\end{proposition}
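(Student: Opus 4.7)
The plan is to derive this proposition as an immediate corollary of Proposition \ref{G3}, specifically its equivalence (i) $\Leftrightarrow$ (iv). The key observation is that under the hypothesis $E(Z)=0$, the $PQDE(Z|W)$ condition
\[
E(Z\mid W>x)\geq E(Z)\quad \text{for all } x\in\mathbb{R}
\]
collapses to exactly the condition appearing in item (iv) of Proposition \ref{G3}, namely $E(Z\mid W>x)\geq 0$ for all $x$. So the equivalence in the statement is nothing more than a relabeling of (iv) in the language of the PQDE dependence concept introduced in this section.

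For the ``if'' direction, I would assume that there exist $W,Z\in\mathcal{L}_1$ with $Y=_{ST}W+Z$, $W=_{ST}X$, $E(Z)=0$, and $(W,Z)$ satisfies $PQDE(Z|W)$. Inserting $E(Z)=0$ into \eqref{PQDE} gives $E(Z\mid W>x)\geq 0$ for all $x$ for which the expectations exist, so condition (iv) of Proposition \ref{G3} is fulfilled. Therefore $X\leq_{CX}Y$.

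For the ``only if'' direction, I would start from $X\leq_{CX}Y$ and invoke the implication (i) $\Rightarrow$ (iv) in Proposition \ref{G3}. This yields $W,Z\in\mathcal{L}_1$ such that $Y=_{ST}W+Z$, $W=_{ST}X$, $E(Z)=0$, and $E(Z\mid W>x)\geq 0=E(Z)$ for all admissible $x$. The last inequality is precisely \eqref{PQDE} for the vector $(W,Z)$, finishing the argument.

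Since the proof amounts to a substitution, there is no substantial obstacle; the only point worth highlighting is that the PQDE notion is strictly weaker than asking a conditional expectation to be nonnegative \emph{in general}, but becomes identical to it under the centering assumption $E(Z)=0$. This is the reason the CX characterization admits such a clean reformulation via PQDE, while the one-sided ICX and ICV results in the preceding propositions only admit sufficient (not necessary) PQDE-type conditions.
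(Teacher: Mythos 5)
Your proof is correct and follows essentially the same route as the paper: both directions are read off directly from the equivalence (i) $\Leftrightarrow$ (iv) of Proposition \ref{G3}, observing that under $E(Z)=0$ the condition \eqref{PQDE} for $(W,Z)$ is exactly $E(Z\mid W>x)\geq 0$. The only cosmetic difference is that the paper re-derives $E(Z)=0$ from equality of means under the CX order, whereas you take it directly from item (iv); both are valid.
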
  
\begin{proof} \quad 
	
	The proof of the `if' part  is immediate from Proposition \ref{G3}, $(iv)$. 
	
	To prove the `only if' part we also use Proposition \ref{G3}, $(iv)$ and we  note  that if $X\leq_{CX}W+Z$ holds for $W=_{ST}X$, then $E(X)=E(W+Z)=E(W)+E(Z)$ and so $E(Z)=0$ since $E(W)=E(X)$. We also note that if $E(Z|W>x)\geq 0$ for all $x$, then $(W,Z)$ is $PQDE(Z|W)$.  
\end{proof}

%\quad 

These results can also be stated by using a copula approach. Thus, from the celebrated Sklar's theorem, we know that the joint distribution function of $(X,Z)$ can be written as 
\begin{equation*}
	\Pr(X\leq x,Z\leq z)= C(F(x),G(z))\ \text{ for all } x,z\in\mathbb{R},
\end{equation*}
where $C$ is a copula function and $F(x)=\Pr(X\leq x)$ and $G(z)=\Pr(Z\leq z)$ are the marginal CDF.  Moreover, if $F$ and $G$ are continuous, then the copula is unique.  Actually, $C$ is the joint CDF of $(F(X),G(Z))$. For basic properties and examples of copulas see the classic book Nelsen  \cite{N06}.

Analogously, the joint survival function can be written as  
\begin{equation}
	\Pr(X> x,Z> z)= \widehat C(\bar F(x),\bar G(z))\ \text{ for all } x,z\in\mathbb{R},
\end{equation}
where $\widehat C$ is a copula function (called \textit{survival copula}) and $\bar F(x)=\Pr(X> x)$ and $\bar G(z)=\Pr(Z> z)$ are the marginal survival functions.
It is well known that 
\begin{equation}\label{hatC}
	\widehat C(u,v) = u + v -1 + C(1-u, 1-v)  \text{ for all } u,v\in [0,1].
\end{equation}

Note that the PDQ property holds for all $F,G$ if $\widehat C$ (or $C$) is PQD in the sense of \eqref{PQD}, that is, it satisfies
\begin{equation}\label{C-PQD}
	\widehat C(u,v)\geq uv  \text{ for all } u,v\in [0,1].
\end{equation}
The reverse property is true for continuous marginals, that is, if $F$ and $G$ are continuous and $(X,Z)$ is PQD, then \eqref{C-PQD} holds for $\widehat C$ (or $C$).  
To the best of our knowledge there are not in the literature a similar copula property for the PQDE notion defined in \eqref{PQDE}. From \eqref{mean}, it is easy to see that  \eqref{PQDE} is equivalent to 
$$\int_{-\infty}^{\infty} \left[\widehat C(\bar F(x),\bar G(z))-\bar F(x)\bar G(z) \right]dz\geq 0\ \text{ for all } x.$$% \text{ such that }\bar F(x)>0.$$

The main advantage of the copula approach  is that if 
\eqref{C-PQD} holds, then \eqref{PQDE} holds for all $F,G$. Thus, from Proposition \ref{ICX}, if $\widehat C$ satisfies 
\eqref{C-PQD}, then $X\leq_{ICX}X+Z$ holds for all distribution functions $F,G$ such that  $E(Z)\geq 0$.  
Similar results holds for the ICV and CX orders. These are distribution-free comparison results. 

There are many examples of copulas satisfying the PQDE (or the stronger PQD) property. For example,  the Archimedean copulas 
$$C(u,v)= \psi^{-1}(\psi(u)+\psi(v))$$
with strict generator $\psi$ are PQD if and only if  $-\log \psi^{-1}$ is subadditive in $(0,\infty)$ (see Exercise 5.34 in \cite{N06}, p.\ 205). This property is equivalent to 
$$\psi(u)+\psi(v)\leq \psi(uv)\ \text{ for all }u,v\in (0,1).$$

The $PQDE(Z|X)$ property in \eqref{PQDE} holds if $(X,Z)$ are positively stop-loss dependent ($PSLD)(Z|X)$) as defined in \cite{DDGK05}, p.\ 270 (see also the property $RTI^0_{ICX}(Z|X)$ in \cite{NPS21}) which implies
$$Z\leq_{ICX}(Z|X>x)\ \text{ for all } x \text{ such that }\bar F(x)>0.$$ 
Hence, from Proposition \ref{ICX}, the $PSLD(Z|X)$ property jointly with $E(Z)\geq 0$ implies $X\leq_{ICX}X+Z$ (since the ICX order, implies the order in means and so the $PQDE(Z|X)$ property holds).

\quad

Now we want to obtain similar results by using weaker conditions than \eqref{C-PQD} for the survival copula $\widehat{C}$. 
For the first result we need the following symmetry  property for $Z$. 

\begin{definition}
	A random variable $Z$ with CDF $G$ is symmetric around zero if $$G(-z)=1-G(z)\\ \text{ for all }z\text{ such that }G(z-)=G(z).$$  	A random variable $Z$ is symmetric around $a\in\mathbb{R}$ if $Z-a$ is symmetric around zero.
\end{definition}

From \eqref{mean}, if $Z$ is symmetric around zero and has a finite mean, then $E(Z)=0$. If $Z$  has an absolutely continuous distribution, then it is symmetric around zero if and only if  it has a PDF $g$ such that $g(z)=g(-z)$ for all $z$. Now we can state the following result.

\begin{proposition}
	If $X,Z\in\mathcal{L}_1$, $Z$ is symmetric around zero and  the survival copula  $\widehat C$ of $(X,Z)$  satisfies 
	\begin{equation}\label{C-PQDE}%
		\widehat{C}(u,v)+\widehat{C}(u,1-v) \geq u  \text{ for all } u,v\in [0,1],	
	\end{equation}	 
	then $X\leq_{CX}X+Z$. 
\end{proposition}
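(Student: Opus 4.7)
The plan is to verify the hypotheses of Proposition \ref{G3}(iv) with $W=X$ and $Y=X+Z$. The symmetry of $Z$ around zero together with finite mean immediately gives $E(Z)=0$, so the remaining task is to establish $E(Z\mid X>x)\geq 0$ at every $x$ with $\bar F(x)>0$. Equivalently, it suffices to show $E(Z\,\mathbf{1}_{\{X>x\}})\geq 0$.

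First I would rewrite the one-sided expectation using the layer-cake formula:
\begin{equation*}
E(Z\,\mathbf{1}_{\{X>x\}})=\int_{0}^{\infty}\Pr(Z>z,\,X>x)\,dz-\int_{-\infty}^{0}\Pr(Z\leq z,\,X>x)\,dz.
\end{equation*}
Expressing the joint probabilities through the survival copula via $\Pr(X>x,Z>z)=\widehat C(\bar F(x),\bar G(z))$ and $\Pr(X>x,Z\leq z)=\bar F(x)-\widehat C(\bar F(x),\bar G(z))$, I get
\begin{equation*}
E(Z\,\mathbf{1}_{\{X>x\}})=\int_{0}^{\infty}\widehat C(\bar F(x),\bar G(z))\,dz-\int_{-\infty}^{0}\bigl[\bar F(x)-\widehat C(\bar F(x),\bar G(z))\bigr]\,dz.
\end{equation*}
Integrability of each piece follows from $E|Z|<\infty$ and monotonicity.

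The key step is now to fold the negative half-line into the positive one using the symmetry of $Z$. Substituting $w=-z$ in the second integral and using that symmetry yields $\bar G(-w)=G(w)=1-\bar G(w)$, so the expression collapses to the single integral
\begin{equation*}
E(Z\,\mathbf{1}_{\{X>x\}})=\int_{0}^{\infty}\bigl[\widehat C(\bar F(x),\bar G(z))+\widehat C(\bar F(x),1-\bar G(z))-\bar F(x)\bigr]\,dz.
\end{equation*}
Applying hypothesis \eqref{C-PQDE} with $u=\bar F(x)$ and $v=\bar G(z)$, the integrand is pointwise nonnegative, giving $E(Z\mid X>x)\geq 0$ and closing the proof via Proposition \ref{G3}(iv).

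The main obstacle I anticipate is bookkeeping: making sure that the symmetry identity $\bar G(-z)=1-\bar G(z)$ holds in the form needed even if $G$ has atoms (the definition of symmetry in the paper only requires the identity at continuity points), so one may need to argue either by a continuity approximation or to note that atoms of $G$ form a measure-zero set for $dz$ integration so the identity is valid almost everywhere. Everything else is essentially a formal manipulation of the layer-cake decomposition and an application of \eqref{C-PQDE}.
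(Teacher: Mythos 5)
Your proposal is correct and follows essentially the same route as the paper's proof: reduce to Proposition \ref{G3}(iv), expand $E(Z\mid X>x)$ via the layer-cake formula and the survival copula, fold the negative half-line onto the positive one using the symmetry of $Z$, and apply \eqref{C-PQDE} pointwise (working with $E(Z\,\mathbf{1}_{\{X>x\}})$ rather than dividing by $\bar F(x)$ is an immaterial difference). Your remark that the symmetry identity $\bar G(-z)=1-\bar G(z)$ need only hold off a Lebesgue-null set of $z$ is a valid and slightly more careful treatment of atoms than the paper gives.
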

\begin{proof}
	As $Z$ is symmetric around zero, then  $E(Z)=0$.  Hence we just need to prove the $PQDE(Z|X)$ property 
	$E(Z|X> x)\geq E(Z)=0$   for all  $x$ such that $\bar F(x)>0$. The survival function of $(Z|X> x)$ can be obtained as 
	$$\Pr(Z>z|X> x)=\frac{\Pr(X>x, Z>z)}{\Pr(X>x)}=\frac{\widehat C(\bar F(x),\bar G(z))}{\bar F(x)}$$
	for all $x$ such that $\bar F(x)>0$. Hence, from \eqref{mean},
	\begin{align*}
		E(Z|X> x)&=\int_0^\infty \Pr(Z>z|X> x) dz - \int_{-\infty}^0 (1-\Pr(Z>z|X> x)) dz\\ 
		&=\int_0^\infty \frac{\widehat C(\bar F(x),\bar G(z))}{\bar F(x)} dz - \int_{-\infty}^0  \left(1-\frac{\widehat C(\bar F(x),\bar G(z))}{\bar F(x)}\right) dz\\ 
		&=\int_0^\infty \frac{\widehat C(\bar F(x),\bar G(z))}{\bar F(x)} dz - \int_0^{\infty} \left(1-\frac{\widehat C(\bar F(x),\bar G(-z))}{\bar F(x)}\right) dz\\
		&=\int_0^\infty \frac{\widehat C(\bar F(x),\bar G(z))+\widehat C(\bar F(x),\bar G(-z))-\bar F(x)}{\bar F(x)} dz \\
		&=\int_0^\infty \frac{\widehat C(\bar F(x),\bar G(z))+\widehat C(\bar F(x),1-\bar G(z))-\bar F(x)}{\bar F(x)} dz,
	\end{align*}
	where the last equality is obtained from the symmetry property for $Z$. Hence, if \eqref{C-PQDE} holds, then  
	$E(Z|X> x)\geq 0$ and the stated CX order holds from Proposition \ref{G3}. 
\end{proof}

Of course, the product copula  satisfies \eqref{C-PQDE} and so $X\geq_{CX}X+Z$ holds for independent $X$ and $Z$ (convolutions) with $Z$ having symmetric distributions around zero (e.g. for Gaussian distribution with zero mean). 
However, the reverse ordering   $X\geq_{CX}X+Z$ does not necessarily  hold if the inequality in \eqref{C-PQDE} is reversed.  Note that  $X\geq_{CX}X+Z$, implies $Var(X)\geq Var(X+Z)$ and, if $X$ and $Z$ are independent, then $$Var(X)\geq Var(X+Z)=Var(X)+Var(Z),$$ which leads to $Var(Z)=0$. Hence, if we assume  $E(Z)=0$, we get $Z=0$ (almost surely) and $X+Z=X$ (almost surely).

%\quad 

To get similar results for the ICX and ICV orders  we need the following asymmetry (or skew)  property for $Z$. %\todo{New?}

\begin{definition}
A random variable  $Z$ with CDF $G$  is right (left) skewed %(with respect to zero)  
if 
	\begin{equation}\label{RS}
		%\Pr(Z>z)\geq 1-\Pr(Z>-z)
		G(-z)\leq 1-G(z)\ (\geq)\quad  \text{ for all } z\geq 0 \text{ such that } G(z-)=G(z).
	\end{equation} 
\end{definition}

Clearly, it means that the right tails have greater (smaller) weights (probability)  than the left tails.  
Note that if $Z$ has a finite mean and it is right (left) skewed, then from \eqref{mean}, $E(Z)\geq 0$ ($\leq$). If $Z$ has an absolutely continuous distribution with PDF $g$, then \eqref{RS} holds if 
$$g(z)\geq g(-z)\ \text{for all }z\geq 0.$$

Now we can state the following results for the ICX and ICV orders.

\begin{proposition}\label{prop3.7}
	If $Z$ is right skewed  and  the survival copula  $\widehat C$ of $(X,Z)$  satisfies \eqref{C-PQDE}, 	 
	then $X\leq_{ICX}X+Z$.% for all distribution functions $F,G$. 
\end{proposition}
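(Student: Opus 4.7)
The plan is to adapt the proof of the preceding (CX-order) proposition, replacing the exact symmetry of $Z$ by the weaker right-skew inequality. Since $X+Z = W+Z$ with $W := X =_{ST} X$, Proposition \ref{G2} reduces the task to establishing $E(Z \mid X > x) \geq 0$ for every $x$ with $\bar F(x) > 0$.

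First I would repeat the chain of equalities from the previous proof verbatim, since none of the steps up through
$$E(Z \mid X > x) = \int_0^\infty \frac{\widehat C(\bar F(x), \bar G(z)) + \widehat C(\bar F(x), \bar G(-z)) - \bar F(x)}{\bar F(x)}\, dz$$
actually used symmetry: they only invoked the survival-copula representation of $(X,Z)$, the mean decomposition \eqref{mean}, and the substitution $z \mapsto -z$ on the integral over $(-\infty,0)$. Thus the displayed identity is available in the present setting as well.

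Next I would plug in the right-skew hypothesis. For $z \geq 0$, outside the at most countable set of atoms of $G$, the inequality $G(-z) \leq 1 - G(z)$ rewrites as $\bar G(-z) \geq 1 - \bar G(z)$. Since every copula is nondecreasing in each argument, this gives
$$\widehat C(\bar F(x), \bar G(-z)) \geq \widehat C(\bar F(x), 1 - \bar G(z)),$$
and then \eqref{C-PQDE} applied with $u = \bar F(x)$ and $v = \bar G(z)$ yields
$$\widehat C(\bar F(x), \bar G(z)) + \widehat C(\bar F(x), 1 - \bar G(z)) \geq \bar F(x).$$
Chaining the two inequalities shows that the integrand in the formula for $E(Z \mid X > x)$ is nonnegative almost everywhere on $(0,\infty)$. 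Hence $E(Z \mid X > x) \geq 0$ for all $x$ with $\bar F(x) > 0$, and Proposition \ref{G2} delivers $X \leq_{ICX} X+Z$.

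The argument is essentially a one-sided weakening of the previous proof; the only mild obstacle is tracking the direction of the skew inequality after the passage between $G$ and $\bar G$, and observing that the equality used in the symmetric case must be replaced by a monotonicity bound on $\widehat C$. Note that there is no need to invoke $E(Z) \geq 0$ as a separate hypothesis, although it follows from right-skewness via \eqref{mean}; the pointwise bound $E(Z \mid X > x) \geq 0$ is what Proposition \ref{G2} actually requires.
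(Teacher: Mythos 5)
Your proof is correct and follows essentially the same route as the paper: it reuses the integral identity for $E(Z\mid X>x)$ established in the convex-order proposition, replaces the symmetry equality by a monotonicity bound on $\widehat C$ derived from the right-skew inequality, and concludes via \eqref{C-PQDE} and Proposition \ref{G2}. The only (immaterial) difference is that you lower-bound $\widehat C(\bar F(x),\bar G(-z))$ by $\widehat C(\bar F(x),1-\bar G(z))$ whereas the paper bounds $\widehat C(\bar F(x),\bar G(z))$ by $\widehat C(\bar F(x),1-\bar G(-z))$; the two choices are interchangeable because \eqref{C-PQDE} is symmetric under $v\mapsto 1-v$.
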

\begin{proof}From the preceding proof we get
	\begin{align*}
		E(Z|X> x)&=\int_0^\infty \frac{\widehat C(\bar F(x),\bar G(z))+\widehat C(\bar F(x),\bar G(-z))-\bar F(x)}{\bar F(x)} dz \\
		&\geq \int_0^\infty \frac{\widehat C(\bar F(x),1-\bar G(-z))+\widehat C(\bar F(x),\bar G(-z))-\bar F(x)}{\bar F(x)} dz,
	\end{align*}
	where the last equality is obtained from the right skewed property \eqref{RS} of $Z$ that can be written as $\bar G(z)\geq 1-\bar G(-z)$ for all $z\geq 0$.  Hence, if \eqref{C-PQDE} holds, then  
	$E(Z|X> x)\geq 0$ and the stated ICX order holds  from Proposition \ref{G2}.
\end{proof}

\begin{proposition}\label{prop3.9}
	If $Z$ is left skewed  and  the copula  $C$ of $(X,Z)$  satisfies 
	\begin{equation}\label{CC-PQDE}%
		{C}(u,v)+{C}(u,1-v) \geq u  \text{ for all } u,v\in [0,1],	
	\end{equation}	 
	then $X\geq_{ICV}X+Z$. 
\end{proposition}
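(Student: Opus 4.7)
The plan is to parallel the argument of Proposition \ref{prop3.7}, replacing the survival copula $\widehat C$ with the ordinary copula $C$ and invoking Proposition \ref{G1} instead of Proposition \ref{G2}. Concretely, I would take $W := X$ in the decomposition $X+Z = W+Z$ and verify that $E(Z \mid X \leq x) \leq 0$ for every $x$ with $F(x) > 0$; Proposition \ref{G1} then yields $X \geq_{ICV} X+Z$.

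By Sklar's theorem, for every $x$ with $F(x) > 0$ the conditional distribution function is
$$\Pr(Z \leq z \mid X \leq x) = \frac{C(F(x), G(z))}{F(x)}.$$
Applying formula \eqref{mean} to $(Z \mid X \leq x)$, writing
$$E(Z \mid X \leq x) = \int_0^\infty\!\Bigl(1-\tfrac{C(F(x),G(z))}{F(x)}\Bigr)dz - \int_{-\infty}^0 \tfrac{C(F(x),G(z))}{F(x)}\,dz,$$
and changing variables $z \mapsto -z$ in the second integral yields
$$E(Z \mid X \leq x) = \int_0^\infty \frac{F(x) - C(F(x), G(z)) - C(F(x), G(-z))}{F(x)}\, dz.$$

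Next, the left-skewness condition \eqref{RS} states $G(-z) \geq 1 - G(z)$ for $z \geq 0$ (off a countable set of discontinuity points, which is negligible for the integral), and since a copula is increasing in its second argument,
$$C(F(x), G(-z)) \geq C(F(x), 1 - G(z)).$$
Hence the integrand is dominated by $[F(x) - C(F(x), G(z)) - C(F(x), 1 - G(z))]/F(x)$, which is nonpositive by the hypothesis \eqref{CC-PQDE}. Integration preserves the sign, so $E(Z \mid X \leq x) \leq 0$, and the conclusion then follows directly from Proposition \ref{G1}.

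I do not anticipate any substantive obstacle: the derivation is the lower-tail analogue of the argument given for Proposition \ref{prop3.7}. The only points that require attention are keeping track of the sign conventions in splitting $E(Z\mid X \leq x)$ into its positive and negative tail integrals via \eqref{mean}, and noting that left (rather than right) skewness, combined with the monotonicity of $C$ in its second coordinate, is exactly what converts the raw identity for $E(Z \mid X \leq x)$ into a bound that can be closed using \eqref{CC-PQDE}.
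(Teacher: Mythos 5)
Your proposal is correct and follows essentially the same route as the paper's proof: the same conditional CDF $C(F(x),G(z))/F(x)$, the same tail-splitting of $E(Z\mid X\leq x)$ via \eqref{mean} with the substitution $z\mapsto -z$, the same use of left-skewness plus monotonicity of $C$ in its second argument to pass from $C(F(x),G(-z))$ to $C(F(x),1-G(z))$, and the same appeal to \eqref{CC-PQDE} and Proposition \ref{G1}. Your explicit remark that the exceptional discontinuity points in \eqref{RS} form a negligible set is a small tidiness improvement over the paper's wording, but the argument is the same.
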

\begin{proof}
	The conditional distribution function of $(Z|X> x)$ is 
	$$\Pr(Z\leq z|X\leq  x)=\frac{\Pr(X\leq x, Z\leq z)}{\Pr(X\leq x)}=\frac {C(F(x), G(z))}{ F(x)}$$
	for all $x$ such that $F(x)>0$.	Hence, from \eqref{mean}, 
	\begin{align*}
		E(Z|X\leq x)&=\int_0^\infty (1-\Pr(Z\leq z|X\leq  x)) dz - \int_{-\infty}^0 \Pr(Z\leq z|X\leq x) dz\\ 
		&=\int_0^\infty \left(1-\frac{C( F(x),G(z))}{ F(x)}\right)dz - \int_{-\infty}^0 \frac{ C( F(x), G(z))}{ F(x)}   dz\\ 
		&=\int_0^\infty \left(1-\frac{C( F(x),G(z))}{ F(x)}\right)dz - \int_0^{\infty} \frac{ C( F(x), G(-z))}{ F(x)}   dz\\ 
		&=\int_0^\infty \frac{F(x)-C( F(x), G(z))- C( F(x), G(-z))}{ F(x)} dz \\
		&\leq \int_0^\infty \frac{F(x)-C( F(x), G(z))- C( F(x), 1-G(z))}{ F(x)} dz,
	\end{align*}
	where the last equality is obtained from the left skewed property that can be written as $1- G(z)\leq  G(-z)$ for all $z\geq 0$.  Hence, if \eqref{CC-PQDE} holds, then  
	$E(Z|X\leq  x)\leq 0$ and the stated ICV order holds for all $F,G$ from Proposition \ref{G1}.
\end{proof}

It is easy to see from \eqref{hatC} that the properties  \eqref{C-PQDE} and 
\eqref{CC-PQDE} are equivalent. Hence, the preceding proof can also be obtained by replacing $X$ and $Z$ with $-X$ and $-Z$ in the result for the ICX order. 
Moreover,  this weak dependence property can be defined (as the PQD property) equivalently by using the copula or the survival copula. Thus we propose the following definition.

\begin{definition}
	We say that a random vector $(X,Z)$ with a copula $C$ is weakly Positively (Negatively) Quadrant Dependent for $Z$ given $X$, shortly written as $wPQD(Z|X)$ ($wNQD(Z|X)$), if $C$ satisfies 
	\begin{equation}\label{C-PQDEbis}%
		C(u,v)+C(u,1-v) \geq u \ (\leq)\ \text{ for all } u,v\in [0,1].	
	\end{equation}
\end{definition}

Note that the new dependence concepts only depend on the copula $C$. 
The concepts $wPQD(X|Z)$ and $wNQD(X|Z)$ for the other conditioning are defined in a similar way. They are not equivalent to the preceding ones, see Example \ref{ex2}. If both $wPQD(Z|X)$ and $wPQD(X|Z)$ hold, then we just write $wPQD(X,Z)$.  

It is easy to see that $PQD(X,Z)$ implies that both $wPQD(Z|X)$ and $wPQD(X|Z)$ hold. However, the reverse property does not hold, that is, $wPQD(Z|X)$ does not imply $PQD(X,Z)$, see Example \ref{ex2}.
Even more, in that example, we prove that  $wPQD(Z|X)$ does not imply $Cov(X,Z)\geq 0$. Hence, the $wPQD(Z|X)$ property may hold even if $PQDE(Z|X)$ does not hold. Also note that the wPQD concept can be applied to bivariate random vectors for which the means do not exist.    However, to apply the PQDE property we need to assume the existence of the means in \eqref{PQDE}. 

The Gini measure of association $\gamma_C$ is defined as 
$$\gamma_C=4\int_0^1 \left[ C(u,u)+C(u,1-u)-u\right]du,$$
see \cite{N06}, p.\ 181. It is a concordance measure. Clearly,  $wPQD(Z|X)$ (resp. $wNQD(Z|X)$) implies $\gamma_C\geq 0$ ($\leq$). The same holds for $wPQD(X|Z)$ (resp. $wNQD(X|Z)$).

{The Spearman's rho correlation coefficient $\rho_C$ can be computed as
	$$\rho_C=-3+12\int_0^1 \int_0^1  C(u,v)dvdu=12\int_0^1 \int_0^1 \left[ C(u,v)-uv\right]dvdu.$$
	So $\rho_C/12$ can be seen as a ``a measure of average quadrant dependence'' (see \cite{N06}, p.\ 189). If $(U,V)$ has $C$ as CDF, then $$\rho_C=12E(UV)-3$$ (see \cite{N06}, p.\ 167).} Of course, for the product copula $\Pi(u,v)=uv$, we get $\rho_{\Pi}=0$. For the Fréchet-Hoeffding upper and lower bounds $M(u,v)=\min(u,v)$ and $W(u,v)=\max(u+v-1,0)$ (see \cite{N06}, p.\ 11), we have $\rho_M=1$ and $\rho_W=-1$. Clearly, if two copulas are ordered $C_1\leq C_2$, then $\rho_{C_1}\leq \rho_{C_2}$.

We could define a measure similar to $\gamma_C$ from the expression used to define the $wPQD(Z|X)$ property but the next proposition shows that it leads to the Spearman's rho coefficient. Hence,  $\rho_C$  can also be seen as a generalization of the Gini measure of association by using this new dependence measure.

\begin{proposition}
	If the generalized Gini measure of association of a random vector with copula $C$ is defined as
	$$\delta_C=6\int_0^1 \int_0^1 \left[ C(u,v)+C(u,1-v)-u\right]dvdu,$$
	then $\delta_C=\rho_C$.
\end{proposition}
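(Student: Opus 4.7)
The plan is to expand the double integral defining $\delta_C$ as a sum of three pieces and recognize that each piece is either trivial or can be converted to the integral appearing in Spearman's rho. Specifically, I would write
\[
\delta_C = 6\int_0^1\!\!\int_0^1 C(u,v)\,dv\,du + 6\int_0^1\!\!\int_0^1 C(u,1-v)\,dv\,du - 6\int_0^1\!\!\int_0^1 u\,dv\,du.
\]

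The first step is the change of variable $w=1-v$ in the inner integral of the middle term, which (because the outer variable $u$ is untouched) gives $\int_0^1 C(u,1-v)\,dv = \int_0^1 C(u,w)\,dw$, so the middle integral equals the first. The third term is immediate: $\int_0^1\!\!\int_0^1 u\,dv\,du = \int_0^1 u\,du = 1/2$, so it contributes $-3$.

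Collecting the pieces yields
\[
\delta_C = 12\int_0^1\!\!\int_0^1 C(u,v)\,dv\,du - 3,
\]
which is exactly the formula recalled in the excerpt for $\rho_C$. There is no real obstacle here; the only thing to be careful about is the orientation of the substitution limits (the sign flip in $dw = -dv$ is cancelled by swapping the limits back from $[1,0]$ to $[0,1]$), and the fact that Fubini applies without issue because $C$ is bounded by $1$ on $[0,1]^2$.
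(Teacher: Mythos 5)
Your proof is correct, and it takes a more elementary route than the paper's. You reduce everything to the single observation that the substitution $w=1-v$ turns $\int_0^1 C(u,1-v)\,dv$ into $\int_0^1 C(u,w)\,dw$, after which $\delta_C=12\int_0^1\int_0^1 C(u,v)\,dv\,du-3$ is literally the integral formula for $\rho_C$; the only ingredients are Fubini (harmless, since $C$ is bounded and continuous) and careful bookkeeping of the substitution limits. The paper instead argues probabilistically: it identifies $D(u,v)=u-C(u,1-v)$ as the copula of $(U,1-V)$ when $(U,V)\sim C$, writes $\delta_C=\tfrac12(\rho_C-\rho_D)$, and evaluates via $\rho_C=12E(UV)-3$ to get $6E(UV)-6E(U(1-V))=12E(UV)-6E(U)=\rho_C$. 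The two computations are equivalent in substance, but they buy different things: yours is shorter and self-contained, needing only the double-integral formula for Spearman's rho, whereas the paper's version exhibits $\delta_C$ as half the difference of the Spearman rhos of $(U,V)$ and its reflection $(U,1-V)$, which mirrors the structure of the classical Gini measure $\gamma_C$ and explains why the authors call $\delta_C$ a generalized Gini measure. Either argument is acceptable as a proof of the stated identity.
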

\begin{proof}
	First we note that if $(U,V)$ is a random vector with  CDF $C$, then $\rho_C=12E(UV)-3$	(see \cite{N06}, p.\ 167). We also note that $D(u,v)=u-C(u,1-v)$ is the CDF (copula) of $(U,1-V)$. Hence 
	$$\delta_C=\frac 1 2 (\rho_C-\rho_D)=6E(UV)-6E(U(1-V))=12E(UV)-6E(U)=\rho_C$$
	and the proof is completed. \end{proof}
%\end{document}

\quad

The preceding proposition proves that the delta measures are symmetric, that is, 
$$\int_0^1 \int_0^1 \left[ C(u,v)+C(u,1-v)-u\right]dvdu=\int_0^1 \int_0^1 \left[ C(u,v)+C(1-u,v)-v\right]dvdu.$$
Hence, if $wPQD(Z|X)$ or $wPQD(X|Z)$ holds, then $\rho_C\geq 0$. A similar property holds for the negative dependence properties.

Note that  $wPQD(Z|X)$ holds if and only if $C\geq D$, where 
$$D(u,v)=u- C(u,1-v)$$ is the copula of $(U,1-V)$ and $(U,V)$ is a random vector with CDF $C$.  Also note that if $wPQD(Z|X)$ and $wNQD(Z|X)$ hold, that is,
$$C(u,v)+C(u,1-v)-u=0\ \text{ for all } u,v\in[0,1],$$
then $(U,V)=_{ST} (U,1-V)$ (or $C=D$). In Examples \ref{ex1} and \ref{ex2}, we provide  copulas satisfying this property that are not PQD. Hence, we show that $wPQD(Z|X)$ does not implies $PQD(X,Z)$, that is, it is a strictly weaker positive dependence notion. Example \ref{ex2} also proves that $wPQD(Z|X)$ does not implies $PQDE(Z|X)$. For the covariance, we have the following results.

\begin{proposition}
	If $(X,Z)$ is $wPQD(Z|X)$ (resp. $wNQD(Z|X)$) and $Z$ is symmetric around $a\in\mathbb{R}$, then $Cov(X,Z)\geq 0$ ($Cov(X,Z)\leq 0$). 
\end{proposition}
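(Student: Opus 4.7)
The plan is to reduce the problem to the zero-mean, symmetric-around-zero case and then attack it by Hoeffding's covariance identity combined with the $wPQD(Z\vert X)$ inequality \eqref{C-PQDEbis}.

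First, I would replace $Z$ by $Z-a$. Since strictly increasing transformations of a marginal leave the copula unchanged, $(X,Z-a)$ has the same copula $C$ as $(X,Z)$, and clearly $Cov(X,Z)=Cov(X,Z-a)$. After this reduction $Z$ is symmetric around zero, in particular $G(-z)=1-G(z)$ for all $z$ outside the (at most countable) discontinuity set of $G$, and $E(Z)=0$, so that $Cov(X,Z)=E(XZ)$.

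Next, I would invoke Hoeffding's identity
\begin{equation*}
Cov(X,Z)=\int_{-\infty}^{\infty}\int_{-\infty}^{\infty}\bigl[C(F(x),G(z))-F(x)G(z)\bigr]\,dz\,dx,
\end{equation*}
fix $x$, split the inner integral at $z=0$, and change the variable $z\mapsto -z$ on $(-\infty,0)$. The two halves rewrite, using the almost-everywhere relation $G(-z)=1-G(z)$ supplied by symmetry, as
\begin{equation*}
\int_{0}^{\infty}\bigl[C(F(x),G(z))+C(F(x),1-G(z))-F(x)\bigr]\,dz.
\end{equation*}
By the $wPQD(Z\vert X)$ hypothesis \eqref{C-PQDEbis}, evaluated at $(u,v)=(F(x),G(z))$, the integrand is nonnegative for every $x$ and every $z\ge0$, so integrating first in $z$ and then in $x$ yields $Cov(X,Z)\ge 0$. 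The $wNQD(Z\vert X)$ case is identical with the inequalities reversed.

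The only delicate point is justifying Hoeffding's identity under the mere assumption $X,Z\in\mathcal{L}_1$; this is standard whenever $Cov(X,Z)$ is well defined (which is implicit in the statement), and otherwise one can truncate $X$ and $Z$ to bounded random variables, apply the argument above, and pass to the limit using dominated convergence on the identity. I expect this integrability bookkeeping to be the main technical obstacle, while the algebraic identity that reveals the $wPQD$ combination is the substantive step.
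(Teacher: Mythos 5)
Your proof is correct and follows essentially the same route as the paper: reduce to $a=0$, apply Hoeffding's covariance identity, split the inner integral at $z=0$, substitute $z\mapsto -z$ and use the symmetry relation $G(-z)=1-G(z)$ to produce the combination $C(F(x),G(z))+C(F(x),1-G(z))-F(x)$, and then invoke \eqref{C-PQDEbis}. The only additions beyond the paper's argument are your explicit remarks that the copula is unchanged by the shift $Z\mapsto Z-a$ and the integrability bookkeeping, both of which are fine.
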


\begin{proof}
	As $Cov(X,Z)=Cov(X,Z-a)$ we can assume that $a=0$.
	If  $Z$ is symmetric around zero and has a CDF $G$, then $1-G(z)=G(-z)$ for all $z$. From \cite{N06}, p.\ 150, the covariance can be computed as 
	\begin{align*}
		Cov(X,Z)
		&=\int_{-\infty}^{\infty}\int_{-\infty}^{\infty}
		\left[ C(F(x),G(z))-F(x)G(z) \right] dzdx\\
		&=\int_{-\infty}^{\infty}\int_{-\infty}^{0}
		\left[ C(F(x),G(z))-F(x)G(z) \right]dz dx\\
		&\quad +\int_{-\infty}^{\infty}\int_0^{\infty}
		\left[ C(F(x),G(z))-F(x)G(z) \right] dz dx\\
		&=\int_{-\infty}^{\infty}\int_0^{\infty}
		\left[ C(F(x),G(-z))-F(x)G(-z) \right]dz dx\\
		&\quad +\int_{-\infty}^{\infty}\int_0^{\infty}
		\left[ C(F(x),G(z))-F(x)G(z) \right] dz dx\\
		&=\int_{-\infty}^{\infty}\int_0^{\infty}
		\left[ C(F(x),1-G(z))+C(F(x),G(z))-F(x) \right]dz dx,
	\end{align*}
	where the last equality is obtained from the symmetry property of $Z$. Finally, by using \eqref{C-PQDEbis}, we get 
	$Cov(X,Z)\geq 0$ (whenever this covariance exists). The proof for $wNQD(Z|X)$ is analogous.
\end{proof}

Example \ref{ex2} proves that 
$wPQD(Z|X)$ does not imply $Cov(X,Z)\geq 0$ when we drop out the condition about the symmetry of $Z$. As an immediate consequence we get the following result.

\begin{proposition}\label{PropCov}
	If $Z$ is symmetric around $a\in\mathbb{R}$ and  $(X,Z)$ has the copula $C$ and  is both $wPQD(Z|X)$ and  $wNQD(Z|X)$, then $\gamma_C=\rho_C=Cov(X,Z)= 0$.
	%If $(U,V)$ have the copula $C$ as CDF and they are both  $wPQD(V|U)$ and $wNQD(V|U)$, then $\gamma_C=\rho_C=Cov(U,V)= 0$. 
\end{proposition}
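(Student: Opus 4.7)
The plan is to observe that the combination of $wPQD(Z|X)$ and $wNQD(Z|X)$ forces the pointwise identity
\begin{equation*}
C(u,v)+C(u,1-v)-u=0\ \text{ for all } u,v\in[0,1],
\end{equation*}
as already noted in the discussion preceding the statement. Everything will flow from this identity together with the previous proposition that computes $\delta_C=\rho_C$ and the previous proposition that handles the covariance.

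First I would dispatch $\gamma_C$. By definition,
\begin{equation*}
\gamma_C=4\int_0^1\bigl[C(u,u)+C(u,1-u)-u\bigr]du,
\end{equation*}
and specializing the above identity to $v=u$ makes the integrand identically zero, so $\gamma_C=0$. Next, for Spearman's rho, I would apply the proposition that shows $\delta_C=\rho_C$: since
\begin{equation*}
\delta_C=6\int_0^1\int_0^1\bigl[C(u,v)+C(u,1-v)-u\bigr]dv\,du,
\end{equation*}
and the integrand vanishes identically by our assumption, we conclude $\rho_C=\delta_C=0$.

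Finally, for the covariance, I would invoke the immediately preceding proposition twice. Applying it under the hypothesis $wPQD(Z|X)$ (together with the symmetry of $Z$ around $a$) yields $Cov(X,Z)\geq 0$; applying it under the hypothesis $wNQD(Z|X)$ (with the same symmetry) yields $Cov(X,Z)\leq 0$. Combining these two inequalities gives $Cov(X,Z)=0$, completing the proof.

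There is no real obstacle here: the statement is essentially a corollary, and the only thing to be careful about is explicitly recording that satisfying both $wPQD(Z|X)$ and $wNQD(Z|X)$ collapses the defining inequality \eqref{C-PQDEbis} into an equality, which is the engine that makes $\gamma_C$, $\rho_C$, and $Cov(X,Z)$ simultaneously vanish.
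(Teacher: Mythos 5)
Your proposal is correct and matches the paper's intended argument: the paper states this result as an immediate consequence of the preceding covariance proposition together with the earlier observations that $wPQD(Z|X)$ and $wNQD(Z|X)$ give opposite signs for $\gamma_C$ and $\rho_C$, which is exactly the sandwich/identity argument you spell out. Nothing is missing.
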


%\red{Creo que ahora la siguiente demostración sobra (ya que son simétricas respecto 1/2)}
%\begin{proof}
%From the preceding proof, the covariance can be obtained as  
%\begin{align*}
%	Cov(U,V)
%	&=\int_{0}^{1}\int_{0}^{1}\left[ C(u,v)-uv \right] dudv\\
%	&=\int_{0}^{1/2}\int_{0}^{1} \left[ C(u,v)-uv \right] dudv+\int_{1/2}^{1}\int_{0}^{1}
%	\left[ C(u,v)-uv \right] dudv\\
%	&=\int_{0}^{1/2}\int_{0}^{1}
%	\left[ C(u,v)-uv \right] dudv+\int_{0}^{1}\int_{1/2}^{1}
%	\left[ C(u,v)-uv \right] dvdu.
%\end{align*}
%Now, doing the change $w=1-v$, we get
%\begin{align*}
%	Cov(U,V)&=\int_{0}^{1/2}\int_{0}^{1}
%	\left[ C(u,v)-uv \right] dudv+\int_{0}^{1}\int_{0}^{1/2}
%	\left[ C(u,1-w)-u(1-w) \right] dwdu\\
%	&=\int_{0}^{1/2}\int_{0}^{1}
%	\left[ C(u,v)-uv \right] dudv+\int_{0}^{1/2}\int_{0}^{1}
%	\left[ C(u,1-v)-u+uv) \right] dudv\\
%	&=\int_{0}^{1/2}\int_{0}^{1}
%	\left[ C(u,v)+C(u,1-v)-u \right]dudv\\	&=0
%\end{align*}
%where the last equality is obtained from \eqref{C-PQDEbis}. Finally, we note that $\rho_C=12Cov(U,V)=0$.
%\end{proof}

%\quad

Let us obtain now similar results by using copula representations for the PDF in the absolutely continuous case. The first result is again for the convex order and  a random variable $Z$  symmetric around zero. We use the notation $\partial _i H$ for the partial derivative of the function $H$ with respect to its $i$th variable. When we write $\partial _i H(a,b)$ we mean that this partial derivative is evaluated at the point $(a,b)$.

\begin{proposition}\label{prop3.14}
	If $Z\in\mathcal{L}_1$ is symmetric around zero and $(X,Z)$ is absolutely continuous and its survival copula  satisfies 
	\begin{equation}\label{D1}%
		\partial_2  \widehat C(u,v) \geq \partial_2  \widehat C(u,1-v)  \text{ for all } u\in (0,1) \text{ and all }v\in (0,1/2),	
	\end{equation}	 
	then $X\leq_{CX}X+Z$. %\todo{Borrar este resultado?}%\red{for all absolutely continuous distribution functions $F,G$.}\todo{Borrar?}
\end{proposition}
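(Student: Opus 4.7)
The plan is to mirror the argument used for the earlier symmetric-$Z$ proposition (with the integrated condition \eqref{C-PQDE}), but to work at the level of the conditional density of $Z$ given $X>x$ rather than at the level of the conditional survival function. Since $Z$ is symmetric around zero, $E(Z)=0$, so by part (iv) of Proposition \ref{G3} it suffices to establish $E(Z\mid X>x)\ge 0$ for every $x$ with $\bar F(x)>0$, and then take $W=X$.

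The first step is to differentiate $\Pr(Z>z\mid X>x)=\widehat C(\bar F(x),\bar G(z))/\bar F(x)$ in $z$ to obtain the conditional density of $Z$ given $X>x$, which comes out as $g(z)\,\partial_2 \widehat C(\bar F(x),\bar G(z))/\bar F(x)$ in the absolutely continuous setting. Hence
\begin{equation*}
E(Z\mid X>x)=\frac{1}{\bar F(x)}\int_{-\infty}^{\infty} z\,g(z)\,\partial_2\widehat C(\bar F(x),\bar G(z))\,dz.
\end{equation*}
Splitting the integral at $0$ and substituting $z\mapsto -z$ in the negative part, I will use the symmetry of $Z$ in the form $g(-z)=g(z)$ and $\bar G(-z)=1-\bar G(z)$ to fold the negative part onto the positive part, arriving at
\begin{equation*}
E(Z\mid X>x)=\frac{1}{\bar F(x)}\int_{0}^{\infty} z\,g(z)\bigl[\partial_2\widehat C(\bar F(x),\bar G(z))-\partial_2\widehat C(\bar F(x),1-\bar G(z))\bigr]\,dz.
\end{equation*}

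The key observation is that continuity plus symmetry around zero forces $G(0)=1/2$, so for $z\ge 0$ we have $\bar G(z)\in(0,1/2]$. With $u=\bar F(x)$ and $v=\bar G(z)$, hypothesis \eqref{D1} makes the bracketed difference nonnegative, while $z\ge 0$ and $g(z)\ge 0$ make the other factors nonnegative. Thus $E(Z\mid X>x)\ge 0$ for every $x$ with $\bar F(x)>0$, and together with $E(Z)=0$ this yields $X\leq_{CX}X+Z$ via Proposition \ref{G3}(iv).

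The main technical obstacle is justifying the density formula and the fold step cleanly: I need absolute continuity of $(X,Z)$ to legitimately differentiate the survival-copula representation, the almost-everywhere existence of $\partial_2\widehat C$ (standard for copulas, since $v\mapsto \widehat C(u,v)$ is Lipschitz), and enough integrability to apply Fubini when interchanging the integral and the substitution $z\mapsto -z$; integrability of $Z$ covers this. A minor boundary issue is the set where $v=1/2$, which has Lebesgue measure zero in the $z$-integral and so does not affect the conclusion even though \eqref{D1} is stated on $(0,1/2)$.
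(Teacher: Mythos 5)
Your proof is correct and follows essentially the same route as the paper: differentiate the survival-copula representation to get the conditional density of $(Z\mid X>x)$, fold the negative half of the integral onto the positive half using $g(-z)=g(z)$ and $\bar G(-z)=1-\bar G(z)$, and apply \eqref{D1} with $\bar G(z)\le\bar G(0)=1/2$ before invoking Proposition \ref{G3}. Your added remarks on the a.e.\ existence of $\partial_2\widehat C$ and the null set $\{v=1/2\}$ are sensible refinements of the same argument, not a different one.
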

\begin{proof}	
	First,  we recall the representation for the survival function of  $(Z|X>x)$
	$$\bar G(z|x):=\Pr(Z>z|X> x)=\frac{\Pr(X>x, Z>z)}{\Pr(X>x)}=\frac{\widehat C(\bar F(x),\bar G(z))}{\bar F(x)}$$
	for all $z$ and all $x$ such that $\bar F(x)>0$. 
	If $(X,Z)$ has an absolutely continuous distribution, the  PDF $g(z|x)$ of  $(Z|X>x)$ is
	$$g(z|x)=-\frac{\partial}{\partial z}\bar G(z|x)=\frac{\partial_2 \widehat C(\bar F(x),\bar G(z))}{\bar F(x)}g(z),$$
	where $g=-\bar G'$ is the PDF of $Z$ and $\partial_{2}\widehat C$ represents the partial derivative of $\widehat C$ with respect to its second variable. Hence 
	\begin{align*}
		E(Z|X> x)&
		=\int_{-\infty}^\infty zg(z|x) dz\\ 
		&=\int_{-\infty}^\infty \frac{\partial_2  \widehat C(\bar F(x),\bar G(z))}{\bar F(x)}zg(z)  dz\\
		%&=\int_{-\infty}^\infty \left(\frac{\partial_2  \widehat C(\bar F(x),\bar G(z))-\bar F(x)}{\bar F(x)}\right)zg(z)  dz\\ 
		&=\int_0^\infty \frac{\partial_2  \widehat C(\bar F(x),\bar G(z))}{\bar F(x)}zg(z)   dz + \int_{-\infty}^0  \frac{\partial_2  \widehat C(\bar F(x),\bar G(z))}{\bar F(x)}zg(z)  dz\\
		&=\int_0^\infty \frac{\partial_2  \widehat C(\bar F(x),\bar G(z))}{\bar F(x)}zg(z)   dz - \int_0^{\infty}  \frac{\partial_2  \widehat C(\bar F(x),\bar G(-z))}{\bar F(x)}zg(-z)  dz.
	\end{align*}
	Now we use the symmetry property for $Z$ that, in the absolutely continuous case, leads to $\bar G(0)=1/2$, $g(-z)=g(z)$ and $\bar G(-z)=1-\bar G(z)$ for all $z$. Then 
	\begin{align*}
		E(Z|X> x)
		&=\int_0^\infty \frac{\partial_2  \widehat C(\bar F(x),\bar G(z))-\partial_2  \widehat C(\bar F(x),1-\bar G(z))}{\bar F(x)}zg(z)   dz
	\end{align*}
	which is nonnegative if \eqref{D1} holds since $\bar G(z)\leq \bar G(0)=1/2$ for $z\geq 0$. Hence $X\leq_{CX}X+Z$ holds from $E(Z)=0$ and Proposition \ref{G3}.
\end{proof}

Note that we just need the absolutely continuous assumption for the random variable $(Z|X>x)$. Also note that, from the proof,  $E(Z|X> x)\geq 0$ holds if, and only if 
\begin{equation}\label{eq3.12}
	E(Z\partial_2  \widehat C(u,\bar G(Z)))\geq 0\  \text{ for all }u\in (0,1).
\end{equation}  So, in the preceding proposition, \eqref{D1} can be replaced with this property. However, in general, it is not easy to check \eqref{eq3.12}since it depends on both $\widehat C$ and $\bar G$.

There are many copulas satisfying the property \eqref{D1}. For example, it holds for the product copula (independence case)  and for the  Farlie-Gumbel-Morgenstern copula
$$\widehat C(u,v)=uv+\theta uv(1-u)(1-v)$$
when  $\theta \geq 0$. %In Table \ref{?}, we give other copulas that satisfy \eqref{D1}.
It is easy to see from \eqref{hatC} that \eqref{D1} is equivalent to 	
\begin{equation}\label{D1bis}%
	\partial_2  C(u,v) \geq   \partial_2 C(u,1-v)  \text{ for all } u\in (0,1) \text{ and }v\in (0,1/2).	
\end{equation}
Thus we can state the following definition.
\begin{definition}
	We say that a random vector $(X,Z)$ with a copula $C$ is strongly  Positively (Negatively) Quadrant Dependent for $Z$ given $X$, shortly written as $sPQD(Z|X)$ ($sNQD(Z|X)$), if $C$ satisfies \eqref{D1bis} ($C$ satisfies  \eqref{D1bis} with the reverse inequality).
\end{definition}

Note that condition $sPQD(Z|X)$ holds when $\partial_2   C(u,v)$ is decreasing for $v\in(0,1)$, that is,  $ C(u,v)$ is concave in $v$. This property is equivalent to the stochastically increasing property  $SI_{ST}(Z|X)$ defined as
$$(Z|X=x_1)\leq_{ST} (Z|X=x_2)\ \text{ for all }\ x_1<x_2.$$ 
The $SI_{ST}(Z|X)$ property is a strong positive dependence property that implies the PQD property (see e.g.  \cite{NPS21}).
However, \eqref{D1} is a weaker condition than $SI_{ST}(Z|X)$ and we do not know if the $PQD(X,Z)$ property implies $sPQD(Z|X)$. %\todo{Contraejemplos?} 
Example \ref{ex2} proves that $sPQD(Z|X)$ does not imply $PQD(X,Z)$. Moreover,  we can prove the following properties.

\begin{proposition}\label{propnew1}
	If $C$ is an absolutely continuous copula, then the following conditions are equivalent:
	\begin{itemize}
		\item[(i)] 	$C(u,v)+ C(u,1-v)-u=0$  for all  $u,v\in [0,1]$.	
		\item[(ii)] $\partial_2   C(u,v) = \partial_2   C(u,1-v)$ for all $u\in (0,1)$ and $v\in (0,1/2)$.	
	\end{itemize}
\end{proposition}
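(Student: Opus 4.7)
The plan is to treat the two directions separately, with both reducing to one-variable calculus in the second argument for fixed $u\in(0,1)$.

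For $(i)\Rightarrow(ii)$, I would fix $u\in(0,1)$ and differentiate the identity $C(u,v)+C(u,1-v)=u$ with respect to $v$. Since $C$ is absolutely continuous, the partial $\partial_2 C(u,\cdot)$ exists wherever we need it, the chain rule gives $\partial_2 C(u,v)-\partial_2 C(u,1-v)=0$, which is exactly $(ii)$.

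For $(ii)\Rightarrow(i)$, I would introduce the auxiliary function $h(v):=C(u,v)+C(u,1-v)-u$ for fixed $u\in(0,1)$ and show $h\equiv 0$ on $[0,1]$. Computing $h'(v)=\partial_2 C(u,v)-\partial_2 C(u,1-v)$, the hypothesis $(ii)$ gives $h'(v)=0$ for $v\in(0,1/2)$; substituting $w=1-v\in(0,1/2)$ handles the interval $(1/2,1)$ by the same identity. Thus $h$ is constant on $(0,1)$ by absolute continuity in $v$, and the boundary value follows from the copula conditions $C(u,0)=0$ and $C(u,1)=u$, which give $h(0)=0+u-u=0$. Hence $h(v)=0$ for all $v$, yielding $(i)$.

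The only subtle point will be justifying the differentiation and integration back cleanly in the absolutely continuous setting: $\partial_2 C(u,\cdot)$ need not be continuous, only defined almost everywhere, so I would phrase the argument via the fundamental theorem of calculus for absolutely continuous functions (applied to $v\mapsto C(u,v)+C(u,1-v)$, which is absolutely continuous since $C$ is). This is routine rather than a genuine obstacle; no delicate estimate or compactness argument is required, and the equivalence ultimately reflects the fact that $(i)$ is exactly the antiderivative form of $(ii)$ pinned down by the copula boundary conditions.
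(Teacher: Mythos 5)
Your proposal is correct and takes essentially the same approach as the paper: differentiate the identity for $(i)\Rightarrow(ii)$, and for $(ii)\Rightarrow(i)$ integrate the vanishing derivative to conclude $C(u,v)+C(u,1-v)$ is constant in $v$, pin the constant down via the boundary conditions $C(u,0)=0$ and $C(u,1)=u$, and extend from $[0,1/2)$ to $[1/2,1]$ using the symmetry of the expression under $v\mapsto 1-v$. Your extra attention to the almost-everywhere/absolute-continuity technicality is a minor refinement of the paper's more informal integration step, not a different argument.
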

\begin{proof}\quad 
	
	If $(i)$ holds taking partial derivatives with respect to $v$ we get $(ii)$.
	
	Conversely, if $(ii)$ holds, then
	$$C(u,v) +C(u,1-v)=\psi(u)$$ for all $u\in (0,1)$ and $v\in (0,1/2)$, where $\psi(u)$ is a function of $u$. Hence, as $C$ is a copula, taking $v\to0^+$ we have 
	$$C(u,0) +C(u,1)=u=\psi(u)$$ for all $u\in [0,1]$ and so $(i)$ holds for all $u\in [0,1]$ and $v\in [0,1/2)$. Finally, we note that then  $(i)$ also holds for all $u\in [0,1]$ and $v\in [1/2,1]$.
\end{proof}

\begin{proposition}\label{propnew2}
	If the  $sPQD(Z|X)$ (resp. $sNQD(Z|X)$) holds, then $wPQD(Z|X)$ (resp. $wNQD(Z|X)$) holds.
\end{proposition}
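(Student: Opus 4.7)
The plan is to view $h_u(v) := C(u,v) + C(u,1-v) - u$, for fixed $u \in [0,1]$, as a function on $[0,1]$ and show that the $sPQD(Z|X)$ condition forces $h_u \geq 0$ everywhere (which is precisely $wPQD(Z|X)$). The argument rests on checking the boundary values and monotonicity.

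First I would observe the two boundary identities $h_u(0) = C(u,0) + C(u,1) - u = 0 + u - u = 0$ and, symmetrically, $h_u(1) = 0$, both using only the definition of a copula. Next, differentiating under the symmetry substitution gives $h_u'(v) = \partial_2 C(u,v) - \partial_2 C(u,1-v)$ for a.e.\ $v$ (the partial $\partial_2 C(u,\cdot)$ exists almost everywhere for each fixed $u$, which is enough for an absolutely-continuous-in-$v$ argument). The $sPQD(Z|X)$ hypothesis \eqref{D1bis} says precisely that $h_u'(v) \geq 0$ on $(0,1/2)$, so $h_u$ is nondecreasing on $[0,1/2]$. Applying the substitution $w = 1-v$ turns the same hypothesis into $h_u'(v) \leq 0$ on $(1/2,1)$, so $h_u$ is nonincreasing on $[1/2,1]$.

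Combining the two pieces with the boundary values, $h_u$ rises from $0$ at $v=0$ up to its maximum at $v=1/2$ and then falls back to $0$ at $v=1$, and is therefore nonnegative throughout $[0,1]$. Since $u$ was arbitrary, this yields \eqref{C-PQDEbis} and hence $wPQD(Z|X)$. The $sNQD(Z|X) \Rightarrow wNQD(Z|X)$ case is obtained by reversing every inequality: $h_u$ then decreases on $[0,1/2]$ and increases on $[1/2,1]$, forcing $h_u \leq 0$.

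The only real subtlety — and the step I would double-check — is the justification for recovering $h_u$ from $h_u'$ via integration, given that $C$ need not be everywhere differentiable in $v$. However, $v \mapsto C(u,v)$ is Lipschitz (with constant $1$) for every fixed $u$, hence absolutely continuous, so $h_u$ is the integral of its a.e.-derivative and the monotonicity conclusions follow from the sign of $h_u'$. No additional smoothness on $C$ is needed beyond what is already implicit in the statement of $sPQD(Z|X)$.
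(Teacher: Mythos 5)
Your proof is correct and follows essentially the same route as the paper: both define the auxiliary function $C(u,v)+C(u,1-v)-u$ (the paper works with $\widehat C$, which is equivalent via \eqref{hatC}), check that it vanishes at $v=0$, deduce monotonicity on $(0,1/2)$ from \eqref{D1bis}, and handle $v\in[1/2,1]$ by the symmetry $v\mapsto 1-v$. Your extra remark on the Lipschitz continuity of $v\mapsto C(u,v)$ justifying the integration of the a.e.\ derivative is a welcome refinement of the paper's argument, not a departure from it.
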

\begin{proof}
	Let us define the function
	$$\phi(u,v)= \widehat C(u,v) +\widehat C(u,1-v)$$
	for $u,v\in[0,1]$. 
	Hence $\phi$ is a continuous function satisfying $\phi(u,0)=u$. Moreover, from \eqref{D1bis}, we know that $\phi$ is increasing in $v$ in the interval   $(0,1/2)$ for all $u\in[0,1]$. Therefore, $\phi(u,v)\geq \phi(u,0)=u$ and  \eqref{C-PQDEbis} holds for $v\in [0,1/2)$. Finally, we note that $\phi(u,v)=\phi(u,1-v)$  and so   \eqref{C-PQDEbis} also holds for $u,v\in [0,1]$.
\end{proof}

%\red{Esta propiedad no es una buena noticia pq ahora la Proposición \ref{prop3.9} no es interesante (es una consecuancia de la otra). }

The relationships between the dependence notions used in this paper are summarized in Table \ref{table1}. 	This table shows that the new positive dependence properties $sPQD(Z|X)$ and $wPQD(Z|X)$ (which only depends on copula properties) imply a positive Spearman rho coefficient (which is an association measure) and a positive Gini measure of association while the classic  $PQDE(Z|X)$ property (which also depends on the marginals) implies a positive Pearson rho coefficient (covariance). To have all these properties together we need the $PQD(X,Z)$ property.

\begin{table}
	\[
	\begin{tabular}{ c c c c c c c}
		$SI_{ST}(Z|X)$ & $\Rightarrow$  &$PQD(X,Z)$ &  $\Rightarrow$ & $PQDE(Z|X)$&$\Rightarrow$ &$Cov(X,Z)\geq 0$\\%
		$\Downarrow$   &&	$\Downarrow$    
		&               &     &               &
		\\%
		$sPQD(Z|X)$&$\Rightarrow$&$wPQD(Z|X)$ &  $\Rightarrow$ & $\rho_C,\gamma_C\geq 0$ & &%
	\end{tabular}
	\]
	\caption{Relationships between positive dependence notions.}\label{table1} 
\end{table}

The following property  gives an interpretation of the $sPQD(Z|X)$ property. It can also be used to construct copulas satisfying property \eqref{D1bis} (see Example \ref{ex3}).

\newpage

\begin{proposition}\label{JM}
	If $C$ is an  absolutely continuous copula and $(U,V)$ has the joint CDF $C$, then the following conditions are equivalent:
	\begin{itemize}
		\item[(i)] 	$C$ satisfies \eqref{D1bis}.	
		\item[(ii)] The PDF $g_u$ of  $(V-1/2|U\leq u)$ is left-skewed, that is, $g_u(-v)\geq g_u(v)$ for all $u\in (0,1)$ and all $v\in (0,1/2)$.	
		\item[(iii)] The PDF $h_u$ of  $(V-1/2|U> u)$ is right-skewed, that is, $h_u(-v)\leq h_u(v)$ for all $u\in (0,1)$ and all $v\in (0,1/2)$.
	\end{itemize} 	
\end{proposition}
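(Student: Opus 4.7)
The plan is to turn each of (ii) and (iii) into the pointwise inequality \eqref{D1bis} by an explicit computation of the conditional densities. Since $(U,V)$ has copula $C$ as its joint CDF, both marginals are uniform on $[0,1]$, so the CDF of $V$ conditional on $U\leq u$ equals
$$\Pr(V\leq v\mid U\leq u)=\frac{C(u,v)}{u},$$
and differentiating in $v$ gives the PDF $\partial_2 C(u,v)/u$. Translating by $1/2$ yields
$$g_u(v)=\frac{\partial_2 C(u,v+1/2)}{u}, \qquad v\in(-1/2,1/2).$$

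For the equivalence (i) $\Leftrightarrow$ (ii), I would rewrite the left-skewness condition $g_u(-v)\geq g_u(v)$ for $v\in(0,1/2)$ as $\partial_2 C(u,1/2-v)\geq \partial_2 C(u,1/2+v)$, and then substitute $w=1/2-v$ (so $w\in(0,1/2)$ and $1/2+v=1-w$), producing exactly $\partial_2 C(u,w)\geq \partial_2 C(u,1-w)$, which is \eqref{D1bis}. Each step is reversible, so the two conditions are equivalent.

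For (iii), the analogous computation uses the joint survival function $\Pr(U>u,V>v)=1-u-v+C(u,v)$, giving
$$\Pr(V>v\mid U>u)=\frac{1-u-v+C(u,v)}{1-u},$$
and differentiating in $v$ produces the conditional PDF $(1-\partial_2 C(u,v))/(1-u)$. Hence
$$h_u(v)=\frac{1-\partial_2 C(u,v+1/2)}{1-u}, \qquad v\in(-1/2,1/2),$$
and the right-skewness condition $h_u(-v)\leq h_u(v)$ for $v\in(0,1/2)$ simplifies, after cancelling $1/(1-u)$ and the constant $1$, to the same inequality $\partial_2 C(u,1/2-v)\geq \partial_2 C(u,1/2+v)$ that characterizes (ii). Thus (i), (ii), (iii) are all equivalent.

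The whole argument reduces to a routine translation: computing each conditional density, shifting by $1/2$, and performing the substitution $w=1/2-v$. The only point requiring a bit of care is keeping track of orientations (left-skew vs. right-skew, and the sign of $1-\partial_2 C$ in the conditional density given $U>u$), but no genuine obstacle is expected.
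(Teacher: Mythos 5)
Your proposal is correct and follows essentially the same route as the paper: compute the conditional densities of $(V-1/2\mid U\leq u)$ and $(V-1/2\mid U>u)$, and translate the skewness inequalities into \eqref{D1bis} via the substitution $w=1/2-v$. The only (harmless) difference is in part (iii), where you work directly with $\partial_2 C$ through the joint survival function $1-u-v+C(u,v)$ and land on the same inequality as in (ii), whereas the paper expresses $h_u$ through the survival copula $\widehat C$ and then invokes the previously noted equivalence of \eqref{D1} and \eqref{D1bis}; your version is marginally more direct.
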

\begin{proof}
	The CDF of $(V-1/2|U\leq u)$ is 
	$$G_u(v)=\Pr(V-1/2\leq v|U\leq u)=\frac{C(u,v+1/2)}{u}.$$
	Hence, its PDF is 
	$$g_u(v)=\frac{\partial_2 C(u,v+1/2)}{u}$$
	for all $u\in (0,1)$ and $v\in (-1/2,1/2)$.
	Therefore 
	$$g_u(-v)-g_u(v)=\frac{\partial_2 C(u,-v+1/2)-\partial_2 C(u,v+1/2)}{u}\geq 0$$
	for all $u\in (0,1)$ and $v\in (0,1/2)$ if and only if 
	$$\partial_2 C(u,w)-\partial_2 C(u,1-w)\geq 0$$ 
	for all $u\in (0,1)$ and all $w=-v+1/2\in (0,1/2)$ (i.e. \eqref{D1bis} holds). Hence $(i)$ and $(ii)$ are equivalent.
	
	%\red{Escribo la demostración para (iii) pero luego la podemos quitar:}
	
%	\quad 
	
	To prove that $(i)$ and $(iii)$ are equivalent we note that the survival function of $(V-1/2|U> u)$ is 
	$$\bar H_u(v)=\Pr(V-1/2> v|U> u)=\frac{1-u-v-1/2+C(u,v+1/2)}{1-u}.$$
	Now using \eqref{hatC} it can be written as 
	$$\bar H_u(v)=\frac{\widehat C(1-u,1/2-v)}{1-u}.$$
	Hence, its PDF is 
	$$h_u(v)=\frac{\partial_2 \widehat C(1-u,1/2-v)}{1-u}$$
	for all $u\in (0,1)$ and $v\in (-1/2,1/2)$.
	Therefore 
	$$h_u(v)-h_u(-v)=\frac{\partial_2 \widehat C(u,1/2-v)-\partial_2 \widehat  C(u,-1/2-v)}{1-u}\geq 0$$
	for all $u\in (0,1)$ and $v\in (0,1/2)$ if and only if 
	$$\partial_2 \widehat C(u,w)-\partial_2 \widehat C(u,1-w)\geq 0$$ 
	for all $u\in (0,1)$ and all $w=1/2-v\in (0,1/2)$ (i.e. \eqref{D1} holds). Hence $(i)$ and $(iii)$ are equivalent.	
\end{proof}

\begin{remark}
	We can obtain properties similar to that in Proposition \ref{prop3.14}  for the ICX and ICV orders from \eqref{D1} and skew properties for $G$ but now we need extra assumptions for $\widehat C$ and $g$. For example, for the ICX order we need to assume that $\partial_2  \widehat C(u,v)$ is increasing in $v$ (that is $SI_{ST}(Z|X)$ holds) and that $g(z)\geq g(-z)$ for all $z\geq 0$. We think that these assumptions are too strong and so we do not state  these results. 
\end{remark}

Finally, we note that, to compare $X$ and $X+Z$ when $X$ and $Z$ are dependent, we can also use the expressions for C-convolutions (sums of dependent variables with a copula $C$) given in \cite{CMR11,NS22}. For example, from expression (2.13) in \cite{NS22}, the survival function of $X+Z$ can be obtained from the survival copula as
$$\Pr(X+Z>y)=\int_{-\infty}^{\infty} g(z) \partial_2\widehat C(\bar F(y-z),\bar G(z))dz.$$

%\red{Nota: He intentado usar esta expressión para obtener condiciones para el orden  ICX pero no me salen. } 

Some ordering results can be seen in  \cite{NS22}. In particular, it is easy to see that if $Z\geq 0$, then $X\leq_{ST}X+Z$ as stated in Proposition \ref{prop2.4}.
Results for the ICX order were obtained in \cite{BM98}. For that order, if $F,G$ and $\widehat C$ are known,  we can also use the expression for $E(Z|X>x)$ given in the proof of Proposition \ref{prop3.7} and the result in Proposition \ref{G2}.% \red{(see Example \ref{}).}

\section{Examples}

First we provide an example of a copula that is $wPQD$ but not $PQD$.

\begin{example}\label{ex1}
	Let us assume that $U$ has a standard uniform distribution  and that $(V|U=u)$ follows a beta distribution with PDF
	$$f_u(v)=k_u v^{(0.5-u)^2}(1-v)^{(0.5-u)^2}\ \text{ for } v\in[0,1]$$
	if  $u\in(0,1/2]$ and the following PDF
	$$f^*_u(v)=2-f_u(v)=2-k_u v^{(0.5-u)^2}(1-v)^{(0.5-u)^2}\ \text{ for } v\in[0,1]$$
	if  $u\in(1/2,1)$, where $k_u=1/\beta(1+(0.5-u)^2,1+(0.5-u)^2)$ is the normalizing constant and $\beta(\cdot,\cdot)$ is the beta function. It is easy to see that $f^*_u$ is a proper PDF for all  $u\in(1/2,1)$ since $f_u(v)< 2$ for all $u\in(0,1/2]$ and all $v\in[0,1]$.
	
	Then the PDF of $V$ is 
	\begin{align*}
		f_V(v)
		&=\int_0^{1/2} k_u (v-v^2)^{(0.5-u)^2}du+\int_{1/2}^1 \left(2-k_u (v-v^2)^{(0.5-u)^2}\right)du\\
		&=\int_0^{1/2} k_u (v-v^2)^{(0.5-u)^2}du+1-\int_{0}^{1/2} k_z (v-v^2)^{(0.5-z)^2}dz=1
	\end{align*}
	for all $v\in[0,1]$, where the last integral is obtained by doing the change $z=1-u$. Hence,  $V$ has a standard uniform distribution and the absolutely continuous  CDF $C$ of $(U,V)$ is a proper copula. %\todo{expresión?}
	
	The joint PDF of this copula is 
	\begin{equation*}
		c(u,v)=\left\{\begin{array}{rl}
			f_u(v) , & \text{ for } u\in [0,1/2],\ v\in[0,1];\\
			2-f_u(v) , & \text{ for } u\in (1/2,1],\ v\in[0,1];\\
			0, & \text{ elsewhere. }
		\end{array}%
		\right.
	\end{equation*}
	The plot and the contour (level) plot of $c$ can be seen in Figure \ref{fig1ex1}.

	\begin{figure}[t]
		%	\captionsetup{margin=0.91cm}
		\begin{center}
			\includegraphics*[scale=0.5]{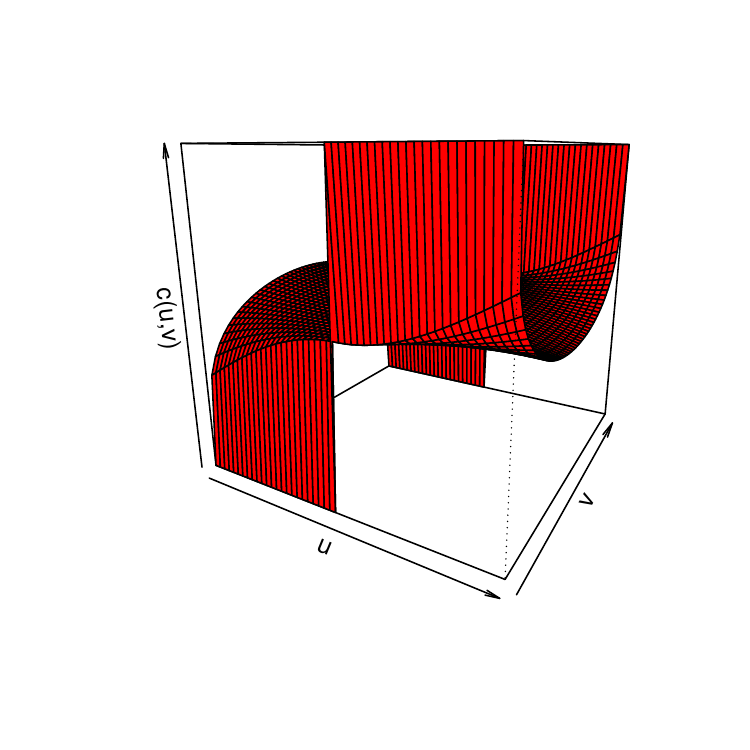}%5x5 Rstudio
			\includegraphics*[scale=0.5]{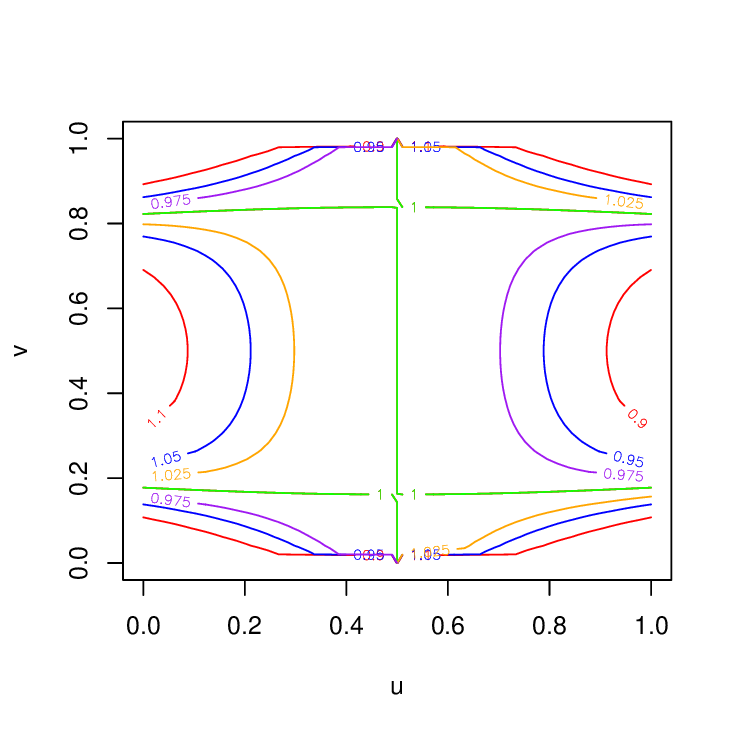}%5x5 Rstudio
			\caption{Plot and contour plot of the PDF for the copula  in Example \ref{ex1}.}\label{fig1ex1}
		\end{center}
	\end{figure}

	Hence, the copula $C$ can be obtained for $v\in[0,1]$ as 
	\begin{align*}
		C(u,v)
		&=\int_0^{u}\int_0^{v}  f_x(y)dydx=\int_0^{u} F_x(v)dx
	\end{align*}
	for $u\in[0,1/2]$ and as 
	\begin{align*}
		C(u,v)
		&=\int_0^{1/2}\int_0^{v} f_x(y)dydx+\int_{1/2}^u \int_0^{v} (2-f_x(y))dydx \\
		&=\int_0^{1/2} F_x(v)dx+ 2v(u-0.5)-\int_{1/2}^u F_x(v)dx\\
		&=\int_0^{1/2} F_x(v)dx+ 2v(u-0.5)-\int_{1-u}^{1/2} F_x(v)dx\\
		&= 2uv-v-\int_0^{1-u} F_x(v)dx
	\end{align*}
	for $u\in(1/2,1]$, where $F_u$ is the beta CDF associated to $f_u$ and where we have used that $F_u=F_{1-u}$ for all $u\in(0,1/2)$.

	For this copula we have 
	$$\Pr(V<v|U=u)=\Pr(V>1-v|U=u)$$
	for all $u,v\in (0,1)$ due to the symmetry of the PDF $f_u$ and $f_u^*$ around $1/2$. Hence, the distributions of $(V|U=u)$ and $(1-V|U=u)$ coincide for all $u\in(0,1)$ and so $(U,V)=_{ST}(U,1-V)$. Therefore,
	$$C(u,v)+C(u,1-v)-u=0 $$
	for all  $u,v\in(0,1)$. Hence it is both $wPQD(V|U)$ and $wNQD(V|U)$. Then, from Proposition \ref{PropCov}, we have  $\gamma_C=\rho_C=Cov(U,V)= 0$. However, it is not PQD since  
	$$C(0.5,0.2)=\int_0^{0.5} F_u(0.2) du=0.09442759<0.5\cdot0.2=0.1.$$ $\hfill \square$
	%\red{Falta comprobar las cuentas. Se podrían hacer con Mathematika o similar.}	
\end{example}

Next we provide another example with similar properties but with  a simple (uniform) distribution. This example also proves that $wPQD(Z|X)$ (or $wPQD(Z|X)$) does not imply $Cov(X,Z)\geq 0$.

\begin{example}\label{ex2}
	Let us assume that $(U,V)$ follows a uniform distribution over the support $S=A_1\cup A_2\cup A_3$, where $A_1=(0,1/2)\times (0,1/4)$, $A_2=(0,1/2)\times (3/4,1)$, and $A_3=(1/2,1)\times (1/4,3/4)$, that is,  its joint PDF is  $c(u,v)=2$ for $(u,v)\in S$ (zero elsewhere). The support $S$ can be seen in Figure \ref{fig1ex2}, left.

	\begin{figure}[t]
		%	\captionsetup{margin=0.91cm}
		\begin{center}
			\includegraphics[scale=0.5]{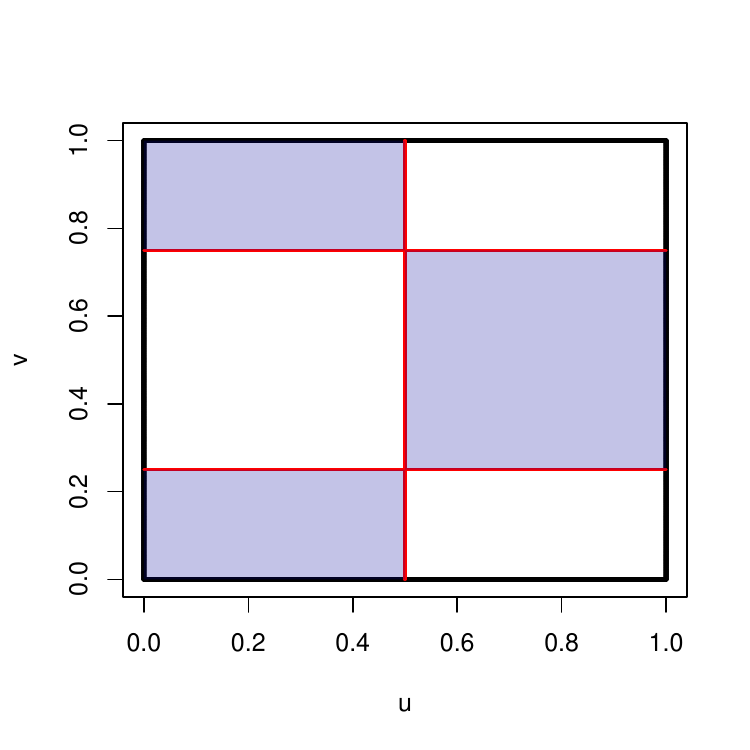}%5x5 Rstudio
			\includegraphics[scale=0.5]{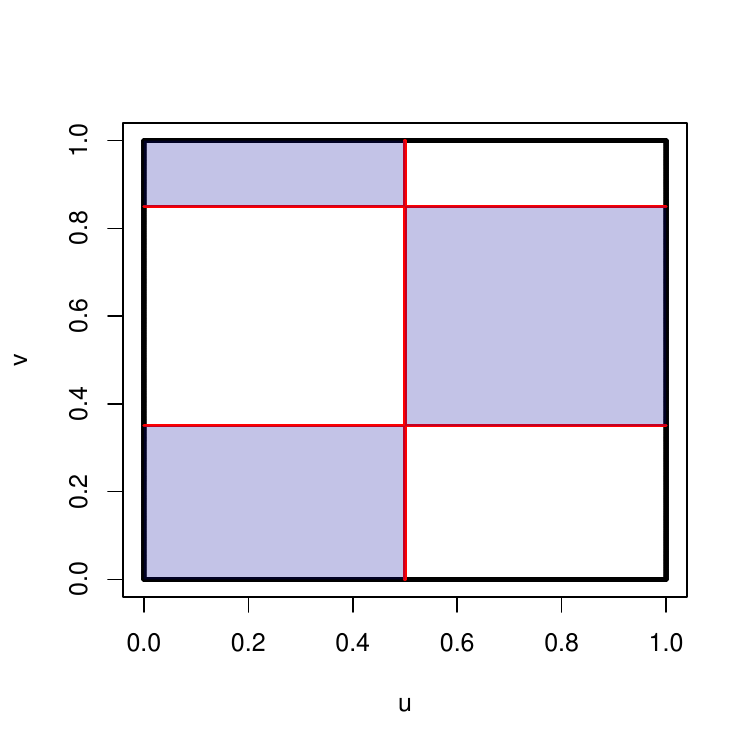}%5x5 Rstudio
			\caption{Supports of the (uniform) copula functions in Examples \ref{ex2} (left) and \ref{ex3} (right) for $\delta=0.1$.}\label{fig1ex2}
		\end{center}
	\end{figure}
	
	Then the copula function is 
	\begin{equation*}
		C(u,v)=\left\{\begin{array}{rl}
			2uv , & \text{ for } u\in [0,1/2],\ v\in[0,1/4];\\
			\frac 1 2 u , & \text{ for } u\in [0,1/2],\ v\in(1/4,3/4];\\
			2uv-u , & \text{ for } u\in [0,1/2],\ v\in(3/4,1];\\
			v , & \text{ for } u\in (1/2,1],\ v\in[0,1/4];\\
			\frac 1 2 +2uv -\frac 1 2 u-v, & \text{ for } u\in (1/2,1],\ v\in(1/4,3/4];\\
			u+v-1 , & \text{ for } u\in (1/2,1],\ v\in(3/4,1].\\
		\end{array}%
		\right.
	\end{equation*}
	It is easy to see that it is a proper copula function. 
	
	%\red{Hacemos gráficas de las fiabilidades de X y X+Z para distribuciones F,G? Lo mismo con las stop-loss.  Generamos datos? }
	
	Clearly, this copula is neither PQD nor NQD. For example,
	$$C(1/2,1/4)=\frac 14 >\frac 18=\frac 12\cdot \frac 14$$
	and
	$$C(1/2,3/4)=\frac 14 <\frac 38=\frac 12\cdot \frac 34.$$
	However, it is easy to see that $(U,V)=_{ST}(U,1-V)$ and so 
	$$C(u,v)+C(u,1-v)-u=0$$
	holds for all $u,v\in[0,1]$. Therefore, it is both  $wPQD(V|U)$ and  $wNQD(V|U)$. 
	Then, from Proposition \ref{propnew1}, it satisfies 
	$$\partial_2  C(u,v) = \partial_2  C(u,1-v)  \text{ for all } u\in (0,1) \text{ and }v\in (0,1/2)$$
	and so 	it is both  $sPQD(V|U)$ and  $sNQD(V|U)$.  It is also easy to see that $\gamma_C=\rho_C=Cov(U,V)=0$ (as stated in Proposition \ref{PropCov}).

	We can also confirm that  it is neither  	$wPQD(U|V)$ nor $wNQD(U|V)$. For example,
	$$ C(u,v)+ C(1-u,v)-v=2uv>0$$
	for $u\in (0,1/2)$ and  $v\in(0,1/4)$
	and
	$$C(u,v)+C(1-u,v)-v=-2u(1-v)<0$$
	for $u\in (0,1/2)$ and  $v\in(3/4,1)$.
	Hence, this example  proves that this concept is not symmetric with respect to the two different conditioning.
	
	Let us consider now $(X,Z)$ with $X=U$ and $Z=V^2$. Clearly, $C$ is the copula of $(X,Z)$ and so it satisfies both  $wPQD(Z|X)$ and  $wNQD(Z|X)$.  Then $E(X)=1/2$ and $E(Z)=E(V^2)=1/3$ and a straightforward calculation shows that $E(XZ)=E(UV^2)=29/192$. Hence,
	$$Cov(X,Z)= \frac{29}{192}-\frac 1 2 \cdot \frac 1 3 =-\frac 1 {64}<0.$$
	Therefore $wPQD(Z|X)$ does not imply $Cov(X,Z)\geq 0$. Even more, the property $wPQD(Z|X)$ does not imply $PQDE(Z|X)$ (since this last property implies $Cov(X,Z)\geq 0$). The same holds for the stronger condition $sPQD(Z|X)$. $\hfill \square$
\end{example}

In the last example we use Proposition \ref{JM} to build a copula satisfying \eqref{D1bis}. It is a modification of the uniform copula used in the preceding example.

\begin{example}\label{ex3}
	Let us assume that $(U,V)$ follows a uniform distribution over the support $S_\delta=A_1\cup A_2\cup A_3$, where $A_1=(0,1/2)\times (0,1/4+\delta)$, $A_2=(0,1/2)\times (3/4+\delta,1)$, and $A_3=(1/2,1)\times (1/4+\delta,3/4+\delta)$ for $\delta\in[0,1/4]$, that is with joint PDF $c(u,v)=2$ for $(u,v)\in S$ (zero elsewhere). The support is plotted  in Figure \ref{fig1ex2} for $\delta=0$ (left, Example \ref{ex2}) and $\delta=0.1$ (right). It is easy to see that the PDF  $c$ leads to a proper copula $C$ for all $\delta\in[0,1/4]$. If we consider now the conditional random variable $(V|U\leq u)$ for $u\in(0,1/2)$, it  has the PDF
	$$g(v|U\leq u)=2 \text{ for } v\in(0,1/4+\delta)\cup (3/4+\delta,1),$$
	which is left-skewed with respect to $1/2$ since $$g(v+1/2|U\leq u)\leq g(-v+1/2|U\leq u)$$ for $v\in (0,1/2)$.
	Analogously, for $u\in(1/2,1)$, it  has the PDF
	$$g(v|U\leq u)=\left\{\begin{array}{rl}
		1/u, & \text{ for }  v\in[0,1/4+\delta];\\
		2 -1/u, & \text{ for } v\in(1/4+\delta,3/4+\delta);\\
		1/u, & \text{ for }  v\in[3/4+\delta,1];\\
	\end{array}%
	\right.$$
	which is also  left-skewed with respect to $1/2$ since 
	$2 -1/u\leq 1/u$ for $u\in(1/2,1)$. Therefore, from Proposition \ref{JM},   $sPQD(V|U)$ holds for all $\delta\in[0,1/4]$. As a consequence,  $wPQD(V|U)$, $\rho_C\geq 0$ and $\gamma_C\geq 0$  also hold. 
	
	Moreover, it is easy to see that $E(V|U\leq u)\leq E(V)=1/2$ for all $u\in(0,1)$ and all $\delta\in[0,1/4]$. Hence, $PQDE(V|U)$ holds and so $Cov(U,V)\geq 0$. 
	$\hfill \square$ 
\end{example}

\section{Conclusions}

We have obtained several conditions to stochastically compare $X$ and $X+Z$ in the convex, increasing convex and increasing concave orders, where $X$ and $Z$ can be dependent. The conditions  are based on the recent results obtained in \cite{GHW24}. The main advantage of our conditions is that they are applied to the copula function (dependence structure) and to the marginal distribution of $Z$. Then, in many cases, they can be checked in a simple way and they allow to get distribution-free comparisons based on copula dependence properties and skew properties of the marginal distribution of $Z$.

The conditions on the copula function lead to positive (or negative) dependence conditions which are related with other well known dependence properties (as PQD or PQDE). So,  from the basic results of copula theory, we know that they hold for several relevant copulas. Even more, they may hold even if the covariance is negative (see Examples \ref{ex1} and \ref{ex2}), that is, they are weak dependence  conditions.  Moreover, we provide  simple interpretations for both dependence properties that can be used to build copulas fulfilling these properties. 

There are several tasks for future research projects. The main ones could be to extend these results to stronger stochastic dominance concepts as the hazard rate or the likelihood ratio orders.  
We should also study if these dependence notions hold for copulas that do not have  the PQD property. Finally, these properties should be applied to real data with different dependence structures in several fields (actuarial sciences, reliability theory, risk theory, etc.).  

%\section*{Acknowledgements}

% The proof of a result should go in the proof environment:
%\begin{proof}
%The proof goes here.
%\end{proof}

%If your paper includes appendices, then precede the first of them by the command
%\appendix
%and then carry on using the \section and \subsection commands, as above.

%\section{The first appendix}

%If you include EPS (encapsulated postscript) figures in your paper,
%then please use the following commands:
%\begin{figure}
%\begin{center}
%\includegraphics{.eps}
%\caption{Caption text.}\label{}
%\end{center}
%\end{figure}

%%%%%%%%%%Declarations%%%%%%%%%%

%\acks % Place the text of your acknowledgements after the \acks (or \Acks) command. This will generate the heading "Acknowledgements". If you wish to make only one acknowledgement, use \ack (or \Ack).
%We wish to thank...

\section*{Acknowledgements}
JN and JMZ acknowledge the partial support of the Ministerio de Ciencia e Innovación of Spain in the project PID2022-137396NB-I00, funded by MICIU/AEI/10.13039/501100011033 and by 'ERDF A way of making Europe'.

%\data % Place any information on data related to the work in your article after the \data (or \Data) command. Omit this command/section and text if it is not relevant to your article.
%The data related to the simulations found in Section 2 can be found at...

%\supp The supplementary material for this article can be found at http://doi.org/10.1017/[TO BE SET]. % Delete this line if there are no supplementary files related to this article. If there are supplementary files related to your article, leave the line unchanged.

%%%%%%%%%%%%Reference list%%%%%%%%%%%%%%
% References should be in the following form (or the BibTeX file
% apt.bst should be used):
%
% For a journal:
% Surname, Initial (year). Title of paper. {\em Journal title}
% {\bf Vol,} page--range.
%
% For a book:
% Surname, Initial (year). {\em Book title}. Publisher, Address.
%
% Note the following example of a reference list.

\end{document}